\newtheorem{lemma}{Lemma}[section]
\newtheorem{theorem}[lemma]{Theorem}
\numberwithin{equation}{section}
\def\Left{\operatorname{\mathit{left}}}		
\def\Right{\operatorname{\mathit{right}}}	
\def\Tail{\operatorname{\mathit{tail}}}		
\definecolor{darkblue}{rgb}{0,0,0.5}
\def\EMPH#1{\textcolor{darkblue}{\textbf{\boldmath \emph{#1}}}}
\def\DIFF#1{\textcolor{BrickRed}{#1}}
\renewcommand*\env@matrix[1][*\c@MaxMatrixCols c]{%
  \hskip -\arraycolsep
  \let\@ifnextchar\new@ifnextchar
  \array{#1}}
\def\abs#1{\mathopen| #1 \mathclose|}
\def\arc#1#2{#1\mathord\shortrightarrow#2}
\def\Dx{\Delta\! x}	
\def\Dy{\Delta\! y}	
\def\Torus{\mathbb{T}}
\DeclareRobustCommand{\colvec}{\genfrac(){0pt}{}}
\newcommand{\rowvec}[2]{\left({#1},{#2}\right)}
\title{A Note on Toroidal Maxwell--Cremona Correspondences}
\author{\href{https://patrickl.in/}{Patrick Lin}
		\\[1ex]
		University of Illinois, Urbana-Champaign}
\date{}
\begin{document}

\maketitle

\begin{abstract}
We explore toroidal analogues of the Maxwell--Cremona correspondence. Erickson and Lin~\cite{el-tmcdc-20} showed the following correspondence for geodesic torus graphs $G$: a \emph{positive} equilibrium stress for $G$, an \emph{orthogonal embedding} of its dual graph $G^*$, and vertex weights such that $G$ is the intrinsic weighted Delaunay graph of its vertices. We extend their results to equilibrium stresses that are not necessarily positive, which correspond to orthogonal drawings of $G^*$ that are not necessarily embeddings. We also give a correspondence between equilibrium stresses and \emph{parallel} drawings of the dual.
\end{abstract}

\section{Introduction}

The Maxwell--Cremona correspondence on the plane establishes an equivalence between the following structures:

\begin{itemize}
\item An \emph{equilibrium stress} on $G$ is an assignment of non-zero weights to the edges of $G$, such that the weighted edge vectors around every interior vertex $p$ sum to zero:
\[
	\sum_{p \colon pq \in E} \omega_{pq} (p-q) = \colvec{0}{0}
\]
\item
A \emph{(orthogonal) reciprocal diagram} for  $G$ is a straight-line drawing of the dual graph $G^*$, in which every edge~$e^*$ is orthogonal to the corresponding primal edge $e$.
\end{itemize}

Erickson and Lin~\cite{el-tmcdc-20} considered graphs on the torus, and established a weaker correspondence for torus graphs: Every \emph{positive} equilibrium graph on any flat torus is affinely equivalent a graph admitting an \emph{embedded} (orthogonal) reciprocal diagram on \emph{some} flat torus. Erickson and Lin's focus on positive equilibria was motivated by equivalence between $G$ admitting an orthogonally embedded dual and $G$ being a \emph{coherent subdivision} (or \emph{weighted Delaunay complex}) of its point set.

Our first result generalizes the correspondence to non-positive equilibria: Every (not necessarily positive) equilibrium graph on any flat torus is affinely equivalent a graph admitting an (not necessarily embedded) orthogonal reciprocal diagram on \emph{some} flat torus.

Maxwell~\cite{m-rfdf-64,m-atrpf-67,m-rffdf-70} derived his reciprocal diagrams with dual edges drawn orthogonally to their corresponding primal edges so as to obtain an equivalence to a third structure, namely, a polyhedral lifting of $G$ in $\R^3$ so that $G$ is the projection of this polyhedral lift, and $G^*$ is the projection of its polar dual around the unit paraboloid. But both Rankine~\cite{r-mam-58,r-pepf-64} beforehand and Cremona~\cite{c-frnsg-72,c-gs-72} afterwards drew their reciprocal diagrams such that dual edges were \emph{parallel} to their corresponding primal edges; indeed Cremona derived parallel reciprocal diagrams via what Crapo~\cite{c-sr-79} called ``skew polarity.'' This skew polar can be obtained by taking the standard polar dual through the unit paraboloid, a quarter turn around the $z$-axis, and then a reflection through the $x\!y$-plane.

On the plane there is no functional difference between a reciprocal diagram in which every edge~$e^*$ is orthogonal to the corresponding primal edge $e$, and one in which every edge $e^*$ is parallel to the corresponding primal edge; indeed, one is just a rotation of the other.

On the torus, however, we find a noticeable difference between orthogonal reciprocal diagrams and parallel reciprocal diagrams. Our second result is a necessary and sufficient condition for an equilibrium torus graph $G$ to admit a parallel reciprocal diagram.

\section{Definitions}

We assume familiarity with the notations and conventions in Erickson and Lin~\cite{el-tmcdc-20}. We define new concepts and recall particularly important ones below. Major differences from Erickson and Lin are \DIFF{highlighted in red}.

\subsection{Drawings}

A \EMPH{drawing} of a graph $G$ on a torus $\Torus$ is a continuous function from $G$ as a topological space to $\Torus$. An \EMPH{embedding} is an injective drawing, mapping vertices of G to distinct points and edges to interior-disjoint simple paths between their endpoints. The faces of an embedding are the components of the complement of the image of the graph; in particular, embeddings are cellular, i.e., all faces are open disks. We will refer to any \EMPH{drawing} of a graph $G$ \DIFF{that is \emph{homotopic} to an embedding of $G$} as a \EMPH{torus graph}.

In any embedded graph, \EMPH{$\Left(d)$} and \EMPH{$\Right(d)$} denote the faces immediately to the left and right of any dart~$d$. Similarly, in any drawing homotopic to the embedding, \EMPH{$\Left(d)$} and \EMPH{$\Right(d)$} refer to the \emph{cycles} bounding the corresponding faces in the embedding.

\subsection{Rotated Duality and Parallel Reciprocality}

Conventionally, the dual graph $G^*$ of an embedded torus graph has the following natural embedding: every vertex $f^*$ of $G^*$ lies in the interior of the corresponding face $f$ of $G$, each edge $e^*$ of $G^*$ crosses only the corresponding edge $e$ of $G$, and each face $p^*$ of $G^*$ contains exactly one vertex $p$ of $G$ in its interior.

\DIFF{We regard any drawing of $G^*$ on $\Torus_\square$ to be \EMPH{rotated dual} to $G$ if its image is homotopic to the clockwise rotation by a quarter circle of the image of the aforementioned natural embedding of $G^*$ on~$\Torus_\square$. More generally, a drawing of $G^*$ is rotated dual to $G$ on some flat torus $\Torus$ if the image of $G^*$ on~$\Torus_\square$ is rotated dual to the image of $G$ on $\Torus_\square$.}

Geodesic graphs $G$ and $G^*$ that are dual to each other are \EMPH{orthogonal reciprocal} if every edge $e$ in~$G$ is orthogonal to its dual edge $e^*$ in $G^*$. \DIFF{We emphasize the $G$ and $G^*$ need \emph{not} be embedded.} An equilibrum stress $\omega$ is an \EMPH{orthogonal reciprocal stress} for $G$ if there is an orthogonal reciprocal \DIFF{drawing} of its dual $G^*$ on the \emph{same} flat torus so that $\omega_e = \abs{e^*}/\abs{e}$ for each edge~$e$.

\DIFF{A geodesic graph $G$ and a geodesic rotated dual $G^*$ are \EMPH{parallel reciprocal} if every edge $e$ in $G$ is parallel to its dual edge $e^*$ in $G^*$. An equilibrium stress $\omega$ is a \EMPH{parallel reciprocal stress} for $G$ if there is a parallel reciprocal graph $G^*$ on the same flat torus so that $\omega_e = \abs{e^*}/\abs{e}$ for each edge $e$.}

\subsection{Circulations and Cocirculations}

We make frequent use of the following lemmas, whose proofs can be found in the paper of Erickson and Lin~\cite{el-tmcdc-20}.

\begin{lemma}[{\cite[Lemma 2.1]{el-tmcdc-20}}]
\label{L:harmonic}
Fix a geodesic drawing of a graph $G$ on $\Torus_\square$ with displacement matrix $\Delta$.
For any circulation~$\phi$ in $G$, we have $\Delta\phi = \Lambda\phi = [\phi]$.
\end{lemma}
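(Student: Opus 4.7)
My plan is to prove both equalities by reducing to the case of a single oriented simple cycle and then extending by linearity. Over $\Real$ the space of circulations on $G$ is spanned by indicator vectors $\chi_C$ of oriented simple cycles --- the standard flow-decomposition argument: if $\phi \neq 0$, choose a dart $d$ with $\phi_d \neq 0$, trace an oriented cycle $C$ through $d$ using only darts carrying positive flow (possible because $\partial\phi = 0$), subtract a positive multiple of $\chi_C$ to zero out some dart, and iterate. So it suffices to show $\Delta\chi_C = \Lambda\chi_C = [\chi_C]$ for every oriented simple cycle $C$ in $G$.

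For $\Delta\chi_C = [\chi_C]$, I would lift the geodesic drawing of $C$ to the universal cover $\Real^2$ of $\Torus_\square$. Because each edge is a geodesic, every lifted dart $d$ is a straight segment whose head-minus-tail vector is by definition $\Delta_d$. Telescoping along the lifted walk shows that $\sum_{d \in C} \Delta_d = \Delta\chi_C$ equals the vector from the starting lift to the ending lift of the basepoint. Since $C$ closes up on $\Torus_\square$, this vector is precisely the lattice translation representing the homology class $[C] = [\chi_C]$.

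For $\Lambda\chi_C = [\chi_C]$, I would use the interpretation that $\Lambda_d$ is the lattice-translation (``winding'') component of dart $d$ relative to a fixed lift $V$ of the vertex set into a chosen fundamental domain, so that $\Delta_d = V(\Head(d)) - V(\Tail(d)) + \Lambda_d$ holds for every dart of any geodesic drawing. Pairing both sides with a circulation $\phi$, the $V$-terms telescope to zero because $\partial\phi = 0$, giving $\Delta\phi = \Lambda\phi$ directly; the previous paragraph then supplies the common value $[\phi]$.

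The main obstacle I anticipate is making this decomposition precise with consistent conventions --- a single choice of fundamental domain and of lifted vertex positions --- so that $\Lambda_d$ really captures only the winding contribution, and so that the identity $\Delta_d = \delta V(d) + \Lambda_d$ holds edge by edge rather than only up to a lattice translation. Once this bookkeeping is settled, each of the two equalities reduces to the elementary observation that exact coboundaries annihilate circulations, together with the telescoping lift argument for a single cycle.
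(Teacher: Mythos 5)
Your proposal is correct and follows essentially the same route as the cited proof of this lemma in Erickson and Lin (the paper itself only quotes the result): the whole content is the dart-level identity $\Delta_d = V(\Head(d)) - V(\Tail(d)) + \Lambda_d$ together with the observation that coboundary terms telescope to zero against any circulation, plus the fact that $\Lambda\phi$ records the homology class. Your preliminary flow decomposition into simple cycles and the universal-cover lifting argument are harmless but not really needed, since both $\Delta\phi = \Lambda\phi$ and $\Lambda\phi = [\phi]$ already hold for arbitrary circulations by linearity once the per-dart identity is in place.
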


\begin{lemma}[{\cite[Lemma 2.4]{el-tmcdc-20}}]
\label{L:embed-on-square}
Fix an essentially simple, essentially 3-connected graph $G$ on $\Torus_\square$, a $2 \times E$ matrix $\Delta$, and a positive stress vector $\omega$.  Suppose for every directed cycle (and therefore any circulation) $\phi$ in $G$, we have $\Delta\phi = \Lambda\phi = [\phi]$.  Then $\Delta$ is the displacement matrix of a geodesic \textbf{drawing} on $\Torus_\square$ that is homotopic to $G$.  If in addition $\omega$ is a positive equilibrium stress for that drawing, the drawing is an embedding.
\end{lemma}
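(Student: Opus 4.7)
The proof splits into two parts: first, constructing a geodesic drawing on $\Torus_\square$ whose displacement matrix is $\Delta$; second, showing that this drawing is an embedding whenever $\omega$ is a positive equilibrium stress.

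For the first part, I would reconstruct vertex positions from $\Delta$. Fix a spanning tree $T$ of $G$, a base vertex $v_0$, and a lift $p_0 \in \R^2$ of $v_0$. For every other vertex $v$, set the lifted position $p(v) = p_0 + \Delta\,\pi_{v_0,v}$, where $\pi_{v_0,v}$ is the characteristic vector of the directed $T$-path from $v_0$ to $v$, and draw each edge $e$ as the straight segment with displacement $\Delta_e$. For any non-tree edge $e$, its fundamental cycle $C_e$ satisfies $\Delta C_e = [C_e] \in \mathbb{Z}^2$ by hypothesis, so the segment drawn for $e$ closes $C_e$ modulo the integer lattice, and the drawing descends consistently to $\Torus_\square$. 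Because every directed cycle in the new drawing has the same homology class as the corresponding cycle of $G$, the drawing is homotopic to $G$, and $\Delta$ is its displacement matrix by construction.

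For the second part, the positive equilibrium condition expresses each vertex of the lifted drawing in $\R^2$ as a strict convex combination of its neighbors. This puts the $\mathbb{Z}^2$-periodic drawing on the universal cover into the scope of a toroidal analogue of Tutte's spring theorem: any essentially simple, essentially $3$-connected torus graph whose vertex positions satisfy the barycentric equations for positive edge weights must be straight-line embedded. Lifting to the universal cover, applying the spring theorem there to show that each periodic face maps to a simple (in fact convex) polygon, and descending to $\Torus_\square$ then yields the desired embedding.

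The main obstacle is the second part. Tutte-style arguments on the torus are more delicate than in the plane because the quotient has no canonical outer face to anchor induction on, and homotopically nontrivial cycles can wrap around the fundamental domain in ways that have no planar analogue. Essential simplicity excludes degenerate bigons that could collapse to single edges, while essential $3$-connectivity rules out separating pairs that would allow foldovers across a separating cycle; both hypotheses appear critical for promoting the local convex-combination structure into global injectivity on a fundamental domain. The first part, by contrast, is essentially bookkeeping once the spanning-tree lift is in place.
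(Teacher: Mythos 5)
Note first that this paper does not reprove this lemma: it is imported verbatim from Erickson and Lin~\cite{el-tmcdc-20}, so the relevant comparison is with their proof. Your outline follows the same two-step structure as theirs. The first step (spanning-tree integration: place lifted vertices via $p(v)=p_0+\Delta\,\pi_{v_0,v}$, use $\Delta\phi=[\phi]$ on fundamental cycles to get consistency modulo $\mathbb{Z}^2$, and conclude homotopy to $G$ because the difference of the two displacement matrices has zero sum on every cycle and is hence a coboundary) is exactly the standard argument and is fine. For the second step, the correct move --- and the one taken in~\cite{el-tmcdc-20} --- is to invoke the toroidal spring theorem as a known black box (due to Colin de Verdi\`ere, Delgado-Friedrichs, Lov\'asz, and Gortler--Gotsman--Thurston): an essentially simple, essentially $3$-connected geodesic torus graph homotopic to an embedding that carries a positive equilibrium stress is itself an embedding. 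Where your write-up overreaches is in suggesting this follows by ``lifting to the universal cover and applying the spring theorem there'': the classical Tutte theorem does not apply to the infinite $\mathbb{Z}^2$-periodic drawing, since there is no outer face pinned in convex position, and the known proofs of the periodic statement use genuinely different tools (variational arguments or index/degree counting), not a reduction to finite planar Tutte. If you state that step as a citation rather than a derivation, your proof matches the source; as written, the universal-cover sketch is the one genuine gap.
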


Let $\Lambda$ be the $2 \times E$ matrix whose columns are homology vectors of edges in $G$. Let $\lambda_1$ and $\lambda_2$ denote the first and second rows, respectively, of $\Lambda$.

\begin{lemma}[{\cite[Lemma 2.5]{el-tmcdc-20}}]
\label{L:cocirc}
The row vectors $\lambda_1$ and $\lambda_2$ describe cocirculations in $G$ with cohomology classes $[\lambda_1]^* = \rowvec{0}{1}$ and $[\lambda_2]^* = \rowvec{-1}{0}$.
\end{lemma}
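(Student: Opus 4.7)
The plan is to split the lemma into two claims: first, that the rows $\lambda_1$ and $\lambda_2$ satisfy the cocirculation property; second, that their cohomology classes are the stated row vectors. Both claims should follow almost immediately from the defining identity $\Lambda\phi = [\phi]$ furnished by Lemma \ref{L:harmonic}, applied to two different families of cycles (face boundaries and homology generators).

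For the first claim, I would recall that a signed function on the edges of $G$ is a cocirculation exactly when its signed sum around every face boundary vanishes. Each face $f$ in the embedding $G \hookrightarrow \Torus_\square$ is an open disk, so its boundary cycle $c_f$ is null-homologous in $\Torus_\square$. Lemma \ref{L:harmonic} then gives $\Lambda c_f = [c_f] = \colvec{0}{0}$, i.e., $\lambda_1 \cdot c_f = 0$ and $\lambda_2 \cdot c_f = 0$ for every face $f$. Hence both rows are cocirculations.

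For the second claim, the cohomology class of a cocirculation $\phi$ is the linear functional on $H_1(\Torus_\square)$ determined by $[\phi]^*([\gamma]) = \phi \cdot \gamma$; this is well defined precisely because $\phi$ annihilates every null-homologous cycle, and hence every boundary. For each row, Lemma \ref{L:harmonic} yields
$$\lambda_i \cdot \gamma = (\Lambda\gamma)_i = [\gamma]_i,$$
so $[\lambda_i]^*$ is the linear functional that extracts the $i$-th coordinate of a homology class.

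The main obstacle is then expressing these two functionals as the specific row vectors $\rowvec{0}{1}$ and $\rowvec{-1}{0}$. This is not simply the naive transpose of the extraction functional: under the paper's convention, cohomology classes on $\Torus_\square$ are recorded as homology classes of the corresponding circulations on the \emph{rotated} dual torus, so the identification between row-vector cohomology classes and linear functionals on $H_1(\Torus_\square)$ involves the quarter-turn rotation $(a,b)\mapsto(-b,a)$ — the same quarter turn that relates the natural dual to the rotated dual used throughout. Under that rotation, the naive row vector $\rowvec{1}{0}$ for ``read off first coordinate'' becomes $\rowvec{0}{1}$, and the naive $\rowvec{0}{1}$ for ``read off second coordinate'' becomes $\rowvec{-1}{0}$, matching the stated values. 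I expect this convention-tracking, rather than any substantive topology, to be the only genuinely delicate part of the proof; verification reduces to evaluating the pairing on the two standard generators of $H_1(\Torus_\square)$.
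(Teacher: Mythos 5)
Your first half is fine but carries almost none of the lemma's content: that the rows of $\Lambda$ vanish on face boundaries (hence are cocirculations), and that $\lambda_i\cdot\gamma=[\gamma]_i$ for every cycle $\gamma$, are both immediate from Lemma~\ref{L:harmonic}. The entire substance of Lemma~\ref{L:cocirc} is the step you defer to ``convention-tracking,'' and there your account is wrong in a way that matters. In this paper (following Erickson and Lin), the cohomology class $[\theta]^*$ of a cocirculation $\theta$ is the homology class of the corresponding circulation in the \emph{natural} dual embedding of $G^*$ on the same torus --- not, as you assert, in the rotated dual. The rotated-dual convention is exactly the subject of Lemma~\ref{L:cocirc2}, and under it the classes are $\rowvec{1}{0}$ and $\rowvec{0}{1}$; so if the convention you state were the one in force, you would be proving Lemma~\ref{L:cocirc2}, not Lemma~\ref{L:cocirc}, and the extra quarter turn $(a,b)\mapsto(-b,a)$ that you then apply to the ``naive'' extraction vectors is pulled out of thin air --- it happens to patch the answer to the stated values, but nothing in your argument derives it. To complete your algebraic route you would have to prove the bridge identity relating the evaluation pairing $\theta\cdot\gamma$ on $H_1(\Torus_\square)$ to the homology class of the dual walk (an intersection-number computation, where the quarter turn and the signs actually come from Poincar\'e duality in the natural dual); that is precisely the piece you omit.

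For comparison, the proof the paper relies on (and reproduces in rotated form for Lemma~\ref{L:cocirc2}) is direct and geometric: $\lambda_1$ is supported exactly on the darts crossing the upward-oriented boundary cycle $\gamma_2$ of the fundamental square from left to right; their duals, in the natural embedding, form a closed walk that deforms onto $\gamma_2$, so $[\lambda_1]^*=\rowvec{0}{1}$. Symmetrically, the darts crossing $\gamma_1$ upward dualize to a closed walk parallel to $\gamma_1$ but traversed in reverse under the dart-duality convention, giving $[\lambda_2]^*=\rowvec{-1}{0}$. Your approach could be made rigorous, but as written the one nontrivial step rests on a misidentified convention rescued by an unexplained rotation, so the signs $\rowvec{0}{1}$ and $\rowvec{-1}{0}$ are not actually established.
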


The following lemma forms the analogue to Lemma~\ref{L:cocirc}, for rotated duals.

\begin{lemma}
\label{L:cocirc2}
The row vectors $\lambda_1$ and $\lambda_2$ describe \emph{rotated} cocirculations in $G$ with cohomology classes $[\lambda_1]^* = \rowvec{1}{0}$ and $[\lambda_2]^* = \rowvec{0}{1}$.
\end{lemma}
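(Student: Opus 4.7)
The plan is to deduce Lemma~\ref{L:cocirc2} directly from Lemma~\ref{L:cocirc}, by invoking the defining feature of the rotated dual: it is homotopic on $\Torus_\square$ to the standard dual composed with a quarter-turn clockwise rotation.

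First, I would observe that the cocirculation property is purely combinatorial---the dual Kirchhoff law at each vertex of $G^*$---and does not depend on how $G^*$ is drawn. Since $\lambda_1$ and $\lambda_2$ are already cocirculations by Lemma~\ref{L:cocirc}, they are automatically rotated cocirculations; only the cohomology class to be computed changes between the two settings.

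Next, I would relate the two cohomology classes. The quarter-turn clockwise rotation $R$ of $\Torus_\square$ induces the map $\rowvec{a}{b} \mapsto \rowvec{b}{-a}$ on $H_1(\Torus_\square) \cong \Real^2$. Because the rotated dual is defined to be homotopic on $\Torus_\square$ to $R$ applied to the standard dual, each cycle in the rotated dual is the $R$-image of the corresponding cycle in the standard dual and hence has $R$-rotated homology class. Since the cohomology class of a cocirculation is computed from the homology classes of dual cycles, the same rotation $\rowvec{a}{b} \mapsto \rowvec{b}{-a}$ transports standard-dual cohomology classes to rotated-dual cohomology classes.

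Applying this rotation to the values from Lemma~\ref{L:cocirc} gives $\rowvec{0}{1} \mapsto \rowvec{1}{0}$ and $\rowvec{-1}{0} \mapsto \rowvec{0}{1}$, which are exactly the cohomology classes claimed for $\lambda_1$ and $\lambda_2$. The only subtlety I anticipate is confirming that the cohomology class transforms naturally under rotation of $G^*$; but this follows because these classes are defined relative to the embedding of $G^*$ on~$\Torus_\square$, so rotating the drawing rotates the coordinates accordingly.
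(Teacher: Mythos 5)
Your proposal is correct and follows essentially the same route as the paper: the paper likewise takes the dual closed walks underlying Lemma~\ref{L:cocirc} and applies the clockwise quarter-turn to conclude $[\lambda_1]^* = \rowvec{1}{0}$ and $[\lambda_2]^* = \rowvec{0}{1}$. You phrase the rotation step via the induced map $\rowvec{a}{b}\mapsto\rowvec{b}{-a}$ on homology, while the paper argues by continuously deforming the rotated dual walk to the boundary cycle $\gamma_1$ (resp.\ $\gamma_2$), which is the same computation.
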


\begin{proof}
Without loss of generality assume $G$ is embedded on the flat square torus $\Torus_\square$. Let $\gamma_1$ and $\gamma_2$ denote directed cycles in $\Torus_\square$ (\emph{not} on $G$) induced by the boundary edges of $\square$, oriented respectively rightward and upward.

Let $d_0,d_1,\ldots,d_{k-1}$ be the sequence of darts in $G$ that cross $\gamma_2$ from left to right, indexed by the upward order of their intersection points. \DIFF{As shown in Erickson and Lin~\cite{el-tmcdc-20}, these darts dualize in the to a closed walk $d_0^*,d_1^*,\dots,d_{k-1}^*$ in $G^*$ that, in the natural embedding, can be continuously deformed to~$\gamma_2$; when rotated clockwise by a quarter circle, this closed walk can instead be continuously deformed to~$\gamma_1$, so it has the same homology class as $\gamma_1$, i.e., $[\lambda_1]^* = \rowvec{1}{0}$. See Figure~\ref{F:cocirc}.}

\begin{figure}[ht]
\centering
\raisebox{-0.45\height}{\includegraphics[scale=0.45,page=1]{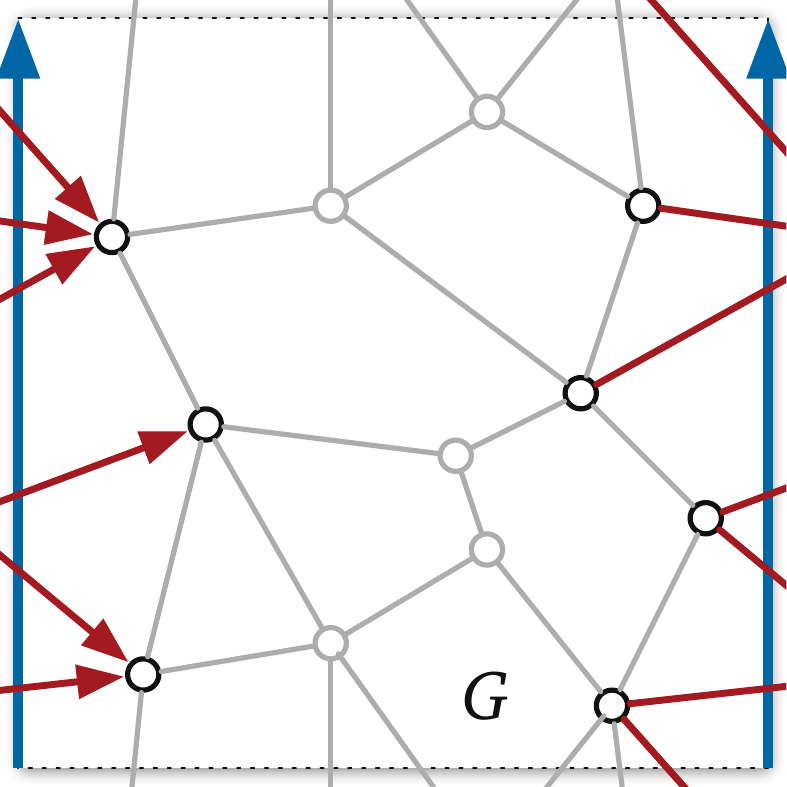}}\quad
$\overset{\text{\normalsize dualize}}{\xrightarrow{\hspace*{3em}}}$\quad
\raisebox{-0.45\height}{\includegraphics[scale=0.45,page=2]{Fig/cohomology}}\quad
$\overset{\text{\normalsize rotate}}{\xrightarrow{\hspace*{3em}}}$\quad
\raisebox{-0.45\height}{\includegraphics[scale=0.45]{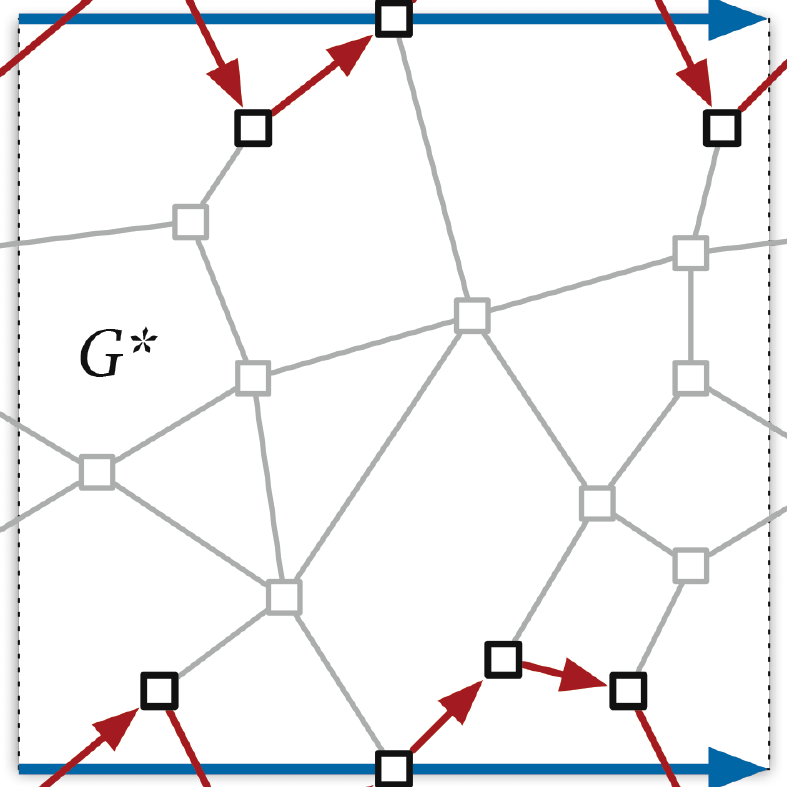}}
\caption{Proof of Lemma \ref{L:cocirc2}: The darts in $G$ crossing either boundary edge of the fundamental square dualize to a closed walk in~$G^*$ parallel to \DIFF{the rotation of} that boundary edge.}
\label{F:cocirc}
\end{figure}

Symmetrically, the darts crossing $\gamma_1$ upward define a closed walk in $G^*$ with the same homology class as \DIFF{$\gamma_2$, so $[\lambda_2]^* = \rowvec{0}{1}$}.
\end{proof}

\section{Orthogonal Reciprocality}
\label{S:oreciprocal}

For an essentially simple, essentially $3$-connected geodesic graph $G$ on the square flat torus $\Torus_\square$ and an equilibrium stress $\omega$, we let $\Delta$ be the $2 \times E$ displacement matrix for $G$, and let $\Omega$ be the $E\times E$ matrix whose diagonal entries are $\Omega_{e,e} = \omega_e$ and whose off-diagonal entries are all~$0$. Define
\begin{equation}
	\alpha = \sum_e \omega_e \Dx_e^2,
	\qquad \beta = \sum_e \omega_e \Dy_e^2,
	\qquad \gamma = \sum_e \omega_e \Dx_e\Dy_e.
	\label{Eq:alphabet}
\end{equation}
These values are the entries of the covariance matrix $\Delta\Omega\Delta^T = \begin{psmallmatrix} \alpha & \gamma \\ \gamma & \beta \end{psmallmatrix}$.

Section~5 of Erickson and Lin~\cite{el-tmcdc-20} consider the case of embedded geodesic torus graphs with positive equilibrium stresses. It turns out that their analysis also applies to the case of a drawing of a torus graph~$G$ that is not necessarily an embedding and the stress is not necessarily positive, with minimal changes. For completeness we give the relevant restatements of the main results:

\begin{theorem}
\label{T:skew-rec-nonlinear}
Let $G$ be a geodesic \DIFF{drawing} on $\Torus_\square$ \DIFF{homotopic to an embedding} with a \DIFF{(not necessarily positive)} equilibrium stress $\omega$.  Let $\alpha$, $\beta$, and~$\gamma$ be defined as in~Equation \eqref{Eq:alphabet}.
If $\alpha\beta - \gamma^2 = 1$, then $\omega$ is an \DIFF{orthogonal} reciprocal stress for the image of $G$ on~$\Torus_M$ if and only if $M = \sigma R \begin{psmallmatrix} \beta & -\gamma \\ 0 & 1 \end{psmallmatrix}$ for any rotation matrix $R$ and any real number $\sigma>0$.  On the other hand, if $\alpha\beta - \gamma^2 \ne 1$, then $\omega$ is not a reciprocal stress for the image of $G$ on any flat torus. 
\end{theorem}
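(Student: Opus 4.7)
My plan is to follow Erickson and Lin's proof template for the positive-stress version of this theorem, with a single essential relaxation: because we no longer require the dual drawing to be embedded, we may invoke only the ``drawing'' conclusion of Lemma~\ref{L:embed-on-square}, which holds regardless of the sign of $\omega$.

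First I would translate the orthogonal reciprocal condition into a matrix identity. The image of $G$ on $\Torus_M$ has displacement matrix $M\Delta$, so the putative dual displacement matrix must be $\Delta^* = J M \Delta \Omega$, where $J$ is the quarter-circle rotation fixed by the rotated-duality convention. Lemma~\ref{L:embed-on-square} applied on $\Torus_M$ tells us that such a $\Delta^*$ arises from a valid geodesic drawing of $G^*$ if and only if $\Delta^* \phi^* = M[\phi^*]$ for every circulation $\phi^*$ in $G^*$. Lemma~\ref{L:cocirc} supplies two specific such circulations---the rows of $\Lambda$, regarded as cocirculations of $G$---with prescribed cohomology classes, and packaging their harmonic conditions into a single $2 \times 2$ identity yields $J M (\Delta \Omega \Lambda^T) = M J_0$, where $J_0 = \begin{psmallmatrix} 0 & -1 \\ 1 & 0 \end{psmallmatrix}$ records the two cohomology classes as columns.

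The algebraic heart of the argument is to evaluate $\Delta \Omega \Lambda^T$. Since $\Delta - \Lambda$ is the coboundary of the vertex positions and the equilibrium condition on $\omega$ annihilates any such coboundary, one obtains $\Delta \Omega \Lambda^T = \Delta \Omega \Delta^T = \Sigma$, where $\Sigma = \begin{psmallmatrix} \alpha & \gamma \\ \gamma & \beta \end{psmallmatrix}$. The master equation simplifies to $J M \Sigma = M J_0$. Taking determinants (using $\det J = \det J_0 = 1$) immediately forces $\alpha\beta - \gamma^2 = \det \Sigma = 1$, proving the ``on the other hand'' clause.

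Assuming $\alpha\beta - \gamma^2 = 1$, the master equation becomes the conjugation relation $M^{-1} J M = J_0 \Sigma^{-1}$. I would verify by direct calculation that $M_0 := \begin{psmallmatrix} \beta & -\gamma \\ 0 & 1 \end{psmallmatrix}$ is one solution, and then note that scalar multiples and rotations commute with the quarter-turn $J$, so every $\sigma R M_0$ with $\sigma > 0$ and $R$ a rotation is also a solution. Conversely, the ratio of any two solutions centralizes $J$, and the centralizer of a quarter-turn rotation in $GL_2(\R)$ is exactly the group of direct similitudes $\{\sigma R : \sigma > 0,~R \text{ a rotation}\}$, so no other $M$ occurs. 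The only substantive obstacle I anticipate is bookkeeping---fixing the orientation (CW vs.\ CCW) of $J$ and the layout of $J_0$ consistently with the rotated-duality convention and Lemma~\ref{L:cocirc} so that the specific matrix $M_0$ claimed in the theorem emerges from the calculation.
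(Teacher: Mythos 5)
Your proposal is correct and takes essentially the same route as the paper, which proves this theorem simply by observing that Erickson and Lin's Section~5 analysis carries over once positivity is invoked only for embeddedness --- exactly your relaxation of Lemma~\ref{L:embed-on-square} to its drawing conclusion, with the determinant argument accounting for the possibility $\alpha\beta-\gamma^2 \le 0$ for non-positive stresses. The bookkeeping you flag does work out: with both quarter-turns taken counterclockwise, $M_0 = \begin{psmallmatrix} \beta & -\gamma \\ 0 & 1 \end{psmallmatrix}$ satisfies $J M_0 \Sigma = M_0 J_0$, and a flipped convention only replaces $M_0$ by $-M_0$, which lies in the same similarity class $\{\sigma R M_0\}$.
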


The result can be reinterpreted in terms of force diagrams as follows:

\begin{lemma}
\label{L:forceM}
Let $G$ be a geodesic \DIFF{drawing} on $\Torus_M$ \DIFF{homotopic to an embedding}, and let $\omega$ be a \DIFF{(not necessarily positive)} equilibrium stress for $G$.  The \DIFF{orthogonal} force diagram of $G$ with respect to $\omega$ lies on the flat torus $\Torus_N$, where $N = J M \Delta\Omega\Delta^T J^T$.
\end{lemma}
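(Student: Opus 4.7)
The plan is to compute the lattice of $\Torus_N$ by measuring the displacements of two generating cycles in the dual graph $G^*$. Since $G$ lies on $\Torus_M$ with displacement matrix $M\Delta$ (where $\Delta$ is its displacement matrix on $\Torus_\square$), each orthogonal dual edge $e^*$ is the vector $\omega_e JM\Delta_e$, so $G^*$ has displacement matrix $JM\Delta\Omega$. To pin down $\Torus_N$ I need lattice generators, which I obtain via Lemma~\ref{L:cocirc2}: the rows $\lambda_1$ and $\lambda_2$ of $\Lambda$ are rotated cocirculations in $G$ that dualize to circulations in $G^*$ with homology classes $(1,0)$ and $(0,1)$ in $\Torus_N$. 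The physical displacement of each such dual circulation is $JM\Delta\Omega\lambda_i^T$, which must equal the corresponding generator of the lattice of $\Torus_N$.

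To express these generators in terms of the covariance matrix $\Delta\Omega\Delta^T$, I would invoke the equilibrium condition. The key identity is $\Delta\Omega\Lambda^T = \Delta\Omega\Delta^T$, which follows because $\Lambda - \Delta$ records, for each edge, the difference of its endpoints' positions within the fundamental square; each row of $\Lambda - \Delta$ is therefore a linear combination of vertex cuts of $G$, and equilibrium ensures $\Delta\Omega$ annihilates every vertex cut. Substituting this identity into the expression for the generators, and reindexing the basis according to the rotated homology conventions from Lemma~\ref{L:cocirc2}, yields the claimed formula $N = JM\Delta\Omega\Delta^T J^T$.

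The main obstacle will be the bookkeeping: the rotation and homology conventions must be tracked carefully so that the $J^T$ factor on the right of the formula emerges correctly. Intuitively, this $J^T$ reflects how rotated duality reindexes the homology basis of $\Torus_N$ relative to the homology basis on $\Torus_\square$, so that the lattice generators obtained from the rotated cocirculations $\lambda_1, \lambda_2$ line up with the columns of $JM\Delta\Omega\Delta^T J^T$ rather than with those of $JM\Delta\Omega\Delta^T$ directly.
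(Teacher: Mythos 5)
Your overall route is the same one the paper intends (it defers to Erickson and Lin's Section~5, and its own parallel analogues in Section~\ref{S:preciprocal} show the template): take the native dual displacement matrix with columns $\omega_e JM\Delta_e$, use equilibrium to trade $\Lambda$ for $\Delta$ --- your identity $\Delta\Omega\Lambda^T=\Delta\Omega\Delta^T$ is correct and is exactly Lemma~\ref{L:harmonic} applied to the circulations given by the columns of $\Omega\Delta^T$ --- and read the lattice of $\Torus_N$ off the displacements of the dual walks determined by $\lambda_1$ and $\lambda_2$. The genuine gap is precisely the step you defer as ``bookkeeping,'' and the resolution you sketch points the wrong way. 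The \emph{orthogonal} force diagram is homotopic to the \emph{natural} dual embedding of $G^*$ (each dual edge still crosses its primal edge), not to the rotated dual; rotated duality and Lemma~\ref{L:cocirc2} are the right tools only for the parallel diagrams of Section~\ref{S:preciprocal}. Here you must use Lemma~\ref{L:cocirc}: the dual walks of $\lambda_1,\lambda_2$ have homology classes $\rowvec{0}{1}$ and $\rowvec{-1}{0}$, which assemble into $\begin{psmallmatrix}0&1\\-1&0\end{psmallmatrix}$ (this is $J^T$ up to the sign convention for $J$, and $JAJ^T$ is unaffected by $J\mapsto -J$), not into the identity. Writing $\Delta^*=(JM\Delta\Omega)^T$ and solving $\Lambda\Delta^*=\Delta\Omega\Delta^TM^TJ^T=\begin{psmallmatrix}0&1\\-1&0\end{psmallmatrix}N^T$ is what produces the trailing $J^T$ in $N=JM\Delta\Omega\Delta^TJ^T$; no further ``reindexing'' is needed, nor is one available under the conventions you adopted.

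If you instead take Lemma~\ref{L:cocirc2} at face value, as written, your computation terminates at $N=JM\Delta\Omega\Delta^T$. That matrix spans the same lattice as $JM\Delta\Omega\Delta^TJ^T$ (the two differ by right multiplication by a unimodular rotation), so it names the same unmarked flat torus, but the marked form is the content of the lemma: it records that the image of $G^*$ on $\Torus_N$ is dual to $G$ under the reference identification --- the check $\Lambda\Delta^*(N^T)^{-1}=\begin{psmallmatrix}0&1\\-1&0\end{psmallmatrix}$ together with the orthogonal analogue of Lemma~\ref{L:dual-embed-on-square} is what certifies this --- and it is what makes the reciprocality criterion $N=M$, and hence the condition $\alpha\beta-\gamma^2=1$ in the orthogonal theorem of Section~\ref{S:oreciprocal}, come out correctly. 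So the missing idea is not an after-the-fact change of basis; it is to invoke Lemma~\ref{L:cocirc} (standard duality) rather than Lemma~\ref{L:cocirc2} (rotated duality) from the start.
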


The main differences between this setting and the setting of positive stress vector $\omega$ (and thus embedded $G$) are as follows.

When $\omega$ is positive, then $\alpha\beta-\gamma^2
	=
	\frac{1}{2}\sum_{e,e'} \omega_e \omega_{e'}
			\big|
				\begin{smallmatrix}
				\Dx_{e\;} & \Dy_{e\;} \\
				\Dx_{e'} & \Dy_{e'}
				\end{smallmatrix}
			\big|^2
	> 0$,
so in fact the requirement $\alpha\beta - \gamma^2 = 1$ is just a scaling condition: given any positive stress vector $\omega$, the stress vector $\omega/\sqrt{\alpha\beta-\gamma^2}$ is a positive stress vector that satisfies said requirement. If $\omega$ is non-positive, however, it is possible that $\alpha\beta-\gamma^2 < 0$, in which case no scaling of $\omega$ is an orthogonal reciprocal stress on any flat torus (in terms of force diagrams, no scaling of $\omega$ can result in $M$ being equal to $N$).

Furthermore, when $\omega$ is positive, the force diagram is embedded on $\Torus_N$, and every face of the force diagram is a convex polygon; if $\omega$ is not necessarily positive, then the force diagram still lies on $\Torus_N$, but is not necessarily embedded, and faces may self-intersect.

\subsection{Examples}

Consider the symmetric embedding of $K_7$ shown in Figure~\ref{F:weird}. The edges fall into one of three equivalence classes, with slopes $3$, $2/3$, $-1/2$ and lengths $\sqrt{10}/7$, $\sqrt{5}/7$, $\sqrt{14}/7$, respectively. Assigning the edges of slope $3$ a stress of $2$, the edges of slope $2/3$ a stress of $-1$, and the edges of slope $-1/2$ a stress of $3$, we can verify that this indeed induces an equilibrium stress, and furthermore $\alpha\beta-\gamma^2 = 1$, so we can find a $2 \times 2$ matrix $M$ such that the image of $G$ on $\Torus_M$ has an orthogonal reciprocal diagram. However, this reciprocal diagram has coincident vertices and overlapping edges and self-intersecting faces; see Figure~\ref{F:weirdrec}.

\begin{figure}[ht]
\centering
\includegraphics[scale=0.5,page=3]{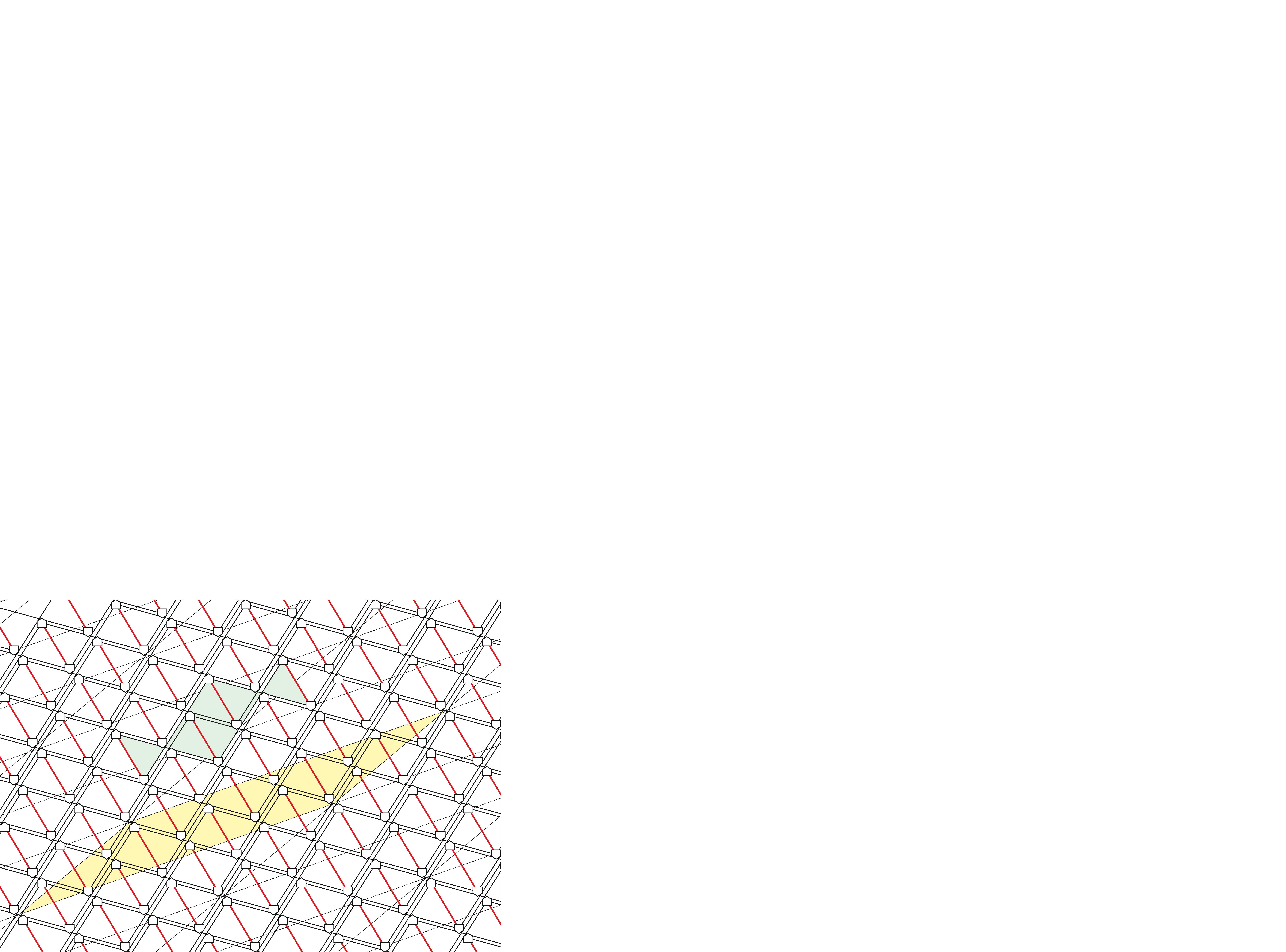}
\caption{The symmetric embedding of $K_7$ with weights $-1$, $2$, and $3$.}
\label{F:weird}
\end{figure}

\begin{figure}[ht]
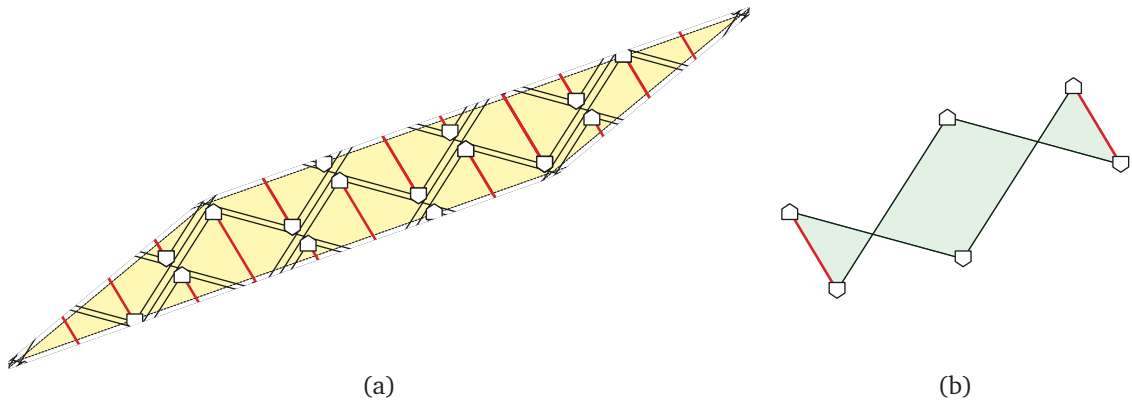

\centering
\begin{tabular}{cc}
\raisebox{-0.5\height}{\includegraphics[scale=0.33,page=2]{Fig/K7-torus-weird}} &
\raisebox{-0.5\height}{\includegraphics[scale=0.33,page=4]{Fig/K7-torus-weird}} \\
\small (a) & \small (b)
\end{tabular}
\caption{(a) The reciprocal diagram of the image of $K_7$ on $\Torus_M$. Overlapping vertices and edges are drawn as being distinct, but vertices close to and pointing to each other actually occupy the same position. (b) One of the faces of the reciprocal diagram.}
\label{F:weirdrec}
\end{figure}

If we instead assign the edges of slope $3$ and $2/3$ a stress of $1$, and the edges of slope $-1/2$ a stress of $-1$, then this is an equilibrium stress, but $\alpha\beta - \gamma^2 = -1$, so no scaling of this stress vector can be a reciprocal stress for $G$ on any flat torus.

\section{Parallel Reciprocality}
\label{S:preciprocal}

In this section we give an analogous version of the previous section, but for parallel reciprocality. Major differences from Erickson and Lin~\cite{el-tmcdc-20} are \DIFF{highlighted in red}.

Fix an essentially simple, essentially 3-connected geodesic graph $G$ on the \emph{square} flat torus~$\Torus_\square$, along with a \DIFF{(not necessarily positive)} equilibrium stress $\omega$ for $G$.  In this section, we describe simple necessary and sufficient conditions for $\omega$ to be a \DIFF{parallel} reciprocal stress for $G$.

Let $\Delta$ be the $2 \times E$ displacement matrix of $G$, and let $\Omega$ be the $E\times E$ matrix whose diagonal entries are $\Omega_{e,e} = \omega_e$ and whose off-diagonal entries are all~$0$.  The results in this section are phrased in terms of the covariance matrix $\Delta\Omega\Delta^T$.

\subsection{The Square Flat Torus}

We first establish necessary and sufficient conditions for $\omega$ to be a \DIFF{parallel} reciprocal stress for $G$ on the \emph{square} flat torus $\Torus_\square$, in terms of the covariance matrix $\Delta\Omega\Delta^T$.

\begin{lemma}
\label{L:square-abg}
If $\omega$ is a \DIFF{parallel} reciprocal stress for $G$ on $\Torus_\square$, then $\Delta\Omega\Delta^T = \DIFF{I}$.
\end{lemma}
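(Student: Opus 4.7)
The plan is to combine the parallel-reciprocal hypothesis (which relates $\Delta^*$ to $\Delta$ and $\Omega$) with the harmonic identity from Lemma \ref{L:harmonic} applied to the rotated dual, and then use the equilibrium condition to trade $\Lambda^T$ for $\Delta^T$.

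First I would translate the hypothesis into matrix form. Under the skew-polar orientation convention, each dual dart $d^*$ is parallel to $d$ with signed length ratio $\omega_{e(d)}$, so the $2\times E$ displacement matrix $\Delta^*$ of $G^*$ on $\Torus_\square$ satisfies $\Delta^* = \Delta\Omega$. Second, by Lemma \ref{L:cocirc2}, the rows $\lambda_1,\lambda_2$ of $\Lambda$ are rotated cocirculations in $G$; dualizing, they are circulations in $G^*$ whose homology classes in $G^*$'s own homology are $\rowvec{1}{0}^T$ and $\rowvec{0}{1}^T$. Since $G^*$ is a geodesic drawing on $\Torus_\square$, Lemma \ref{L:harmonic} applied to $G^*$ yields $\Delta^*\lambda_i^T = \mathbf{e}_i$, so $\Delta^*\Lambda^T = I$, and combined with $\Delta^* = \Delta\Omega$ this gives $\Delta\Omega\Lambda^T = I$.

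The final step is to upgrade $\Lambda^T$ to $\Delta^T$ via equilibrium. Fixing lifts $\tilde p \in \Real^2$ of each vertex of $G$ in the universal cover of $\Torus_\square$, each displacement differs from its homology class by a coboundary: $\Delta = \Lambda + P\partial$, where $P$ is the $2\times V$ matrix of chosen lifts and $\partial$ is the signed vertex--dart incidence matrix. The equilibrium condition $\sum_{q\colon pq\in E}\omega_{pq}(p-q) = 0$ is precisely $\Delta\Omega\partial^T = 0$, so
$$\Delta\Omega\Delta^T \;-\; \Delta\Omega\Lambda^T \;=\; \Delta\Omega\partial^T P^T \;=\; 0,$$
and therefore $\Delta\Omega\Delta^T = \Delta\Omega\Lambda^T = I$, as claimed.

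The main delicacy I expect is bookkeeping: I need to confirm that the signed relation $\Delta^* = \Delta\Omega$ (rather than $\Delta^* = -\Delta\Omega$ or some more exotic variant) is the correct consequence of Crapo's ``skew polarity'' for torus rotated duals in light of the rotated-dual orientation convention implicit in Lemma \ref{L:cocirc2}. Once these sign conventions line up, the rest of the argument is a three-line chain; it also mirrors the analogous argument for the orthogonal case, where $\Delta^* = J\Delta\Omega$ and $\Delta^*\Lambda^T = J$ (from Lemma \ref{L:cocirc}) combine to give the same final identity $\Delta\Omega\Delta^T = I$ on $\Torus_\square$.
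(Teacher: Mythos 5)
Your proposal is correct and takes essentially the same route as the paper: the key identity $\Lambda\Delta^*=I$ (your $\Delta\Omega\Lambda^T=I$) is obtained exactly as in the paper's proof, by applying Lemma~\ref{L:harmonic} to the dual drawing with displacement matrix $(\Delta\Omega)^T$ together with the rotated cocirculation classes of Lemma~\ref{L:cocirc2}, and your flagged sign convention $\Delta^*=\Delta\Omega$ is the same identification the paper makes without further comment. The only difference is cosmetic: where the paper trades $\Lambda^T$ for $\Delta^T$ by observing that the columns of $\Omega\Delta^T$ are circulations in $G$ and citing Lemma~\ref{L:harmonic} for $G$, you re-derive that same fact by hand via the coboundary decomposition $\Delta=\Lambda+P\partial$ and the equilibrium identity $\Delta\Omega\partial^T=0$.
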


\begin{proof}
Suppose $\omega$ is a \DIFF{parallel} reciprocal stress for $G$ on $\Torus_\square$.  Then there is a geodesic
\DIFF{drawing} of the dual graph $G^*$ on $\Torus_\square$ where $e~\DIFF{\parallel}~e^*$ and 
$\abs{e^*} = \omega_e\abs{e}$ for every edge $e$ of $G$.  Let 
$\Delta^* = (\Delta\Omega)^{\DIFF{T}}$ denote the~$E \times 2$ matrix whose rows are the
displacement row vectors of $G^*$.

Recall from Lemma~\ref{L:cocirc} that the first and second rows of $\Lambda$ describe cocirculations of $G$ with cohomology classes \DIFF{$\rowvec{1}{0}$} and \DIFF{$\rowvec{0}{1}$}, respectively.  Applying Lemma~\ref{L:harmonic} to $G^*$ implies $\theta\Delta^* = [\theta]^*$ for any cocirculation $\theta$ in $G$.  It follows immediately that
$\Lambda\Delta^* = \DIFF{\begin{psmallmatrix} 1 & 0 \\ 0 & 1 \end{psmallmatrix} = I}$.

Because the rows of $\Delta^*$ are the displacement vectors of $G^*$, for every vertex $p$ of $G$ we have
\begin{equation}
	\sum_{q\colon pq\in E} \Delta^*_{(\arc{p}{q})^*}
	~=~
	\sum_{d \colon \Tail(d) = p} \Delta^*_{d^*}
	~=~
	\sum_{d \colon \Left(d^*) = p^*} \Delta^*_{d^*}
	~=~
	\rowvec{0}{0}.
	\label{Eq:circulation}
\end{equation}
It follows that the \emph{columns} of $\Delta^*$ describe circulations in $G$.
Lemma~\ref{L:harmonic} now implies that $\Delta\Delta^* = \DIFF{\Delta\Omega\Delta^T = I}$.
\end{proof}

\begin{lemma}
\label{L:dual-embed-on-square}
Fix an $E \times 2$ matrix $\Delta^*$.  If $\Lambda\Delta^* = \DIFF{I}$, then $\Delta^*$ is the displacement matrix of a geodesic drawing on $\Torus_\square$ that is dual to $G$.  Moreover, if that drawing has a \DIFF{positive} equilibrium stress, it is actually an embedding.
\end{lemma}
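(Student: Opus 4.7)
The plan is to mimic the proof of Lemma~\ref{L:embed-on-square}, applied to $G^*$ with the $2\times E$ matrix $(\Delta^*)^T$. To invoke that lemma, I need to verify that $(\Delta^*)^T\phi^* = [\phi^*]$ for every circulation $\phi^*$ in $G^*$, where $[\phi^*]$ denotes the homology class of $\phi^*$ on $\Torus_\square$ with respect to the rotated dual embedding.

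The first step is to translate the hypothesis $\Lambda\Delta^* = I$, which concerns cocirculations in $G$, into the required statement about circulations in $G^*$. Every circulation $\phi^*$ in $G^*$ agrees as an edge-function with a cocirculation $\theta$ in $G$, and under the rotated-dual convention Lemma~\ref{L:cocirc2} identifies $[\phi^*]$ with the rotated cohomology class $[\theta]^*$; in particular $[\lambda_1]^* = \rowvec{1}{0}$ and $[\lambda_2]^* = \rowvec{0}{1}$. Hence $\Lambda\Delta^* = I$ reads exactly $\lambda_i\Delta^* = [\lambda_i]^*$ for $i=1,2$, which is the desired identity on the two generators of cohomology.

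To upgrade this to all cocirculations, I would decompose any $\theta$ as $\theta = \theta_0 + c_1\lambda_1 + c_2\lambda_2$, where $(c_1,c_2) = [\theta]^*$ and $\theta_0$ is a linear combination of vertex-star cocirculations $\theta_p$ (each having zero cohomology). The contribution of the topological part is handled by the previous paragraph, leaving $\theta_p\Delta^* = \rowvec{0}{0}$ for every vertex $p$ of $G$ as the one remaining condition. Geometrically this is exactly the requirement that the dual faces of $G^*$ close up, matching Equation~\eqref{Eq:circulation} in the proof of Lemma~\ref{L:square-abg}; in the intended application of this lemma --- where $\Delta^* = \Omega\Delta^T$ for an equilibrium stress $\omega$ of $G$ --- it becomes the equilibrium condition $\sum_{q\colon pq\in E}\omega_{pq}(q-p) = \rowvec{0}{0}$ repackaged dually.

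With the hypothesis of (the dual form of) Lemma~\ref{L:embed-on-square} thereby verified, I would obtain the geodesic drawing of $G^*$ on $\Torus_\square$ with displacement matrix $(\Delta^*)^T$ and homotopic to the rotated dual of $G$; the ``moreover'' clause about positive equilibrium stresses forcing an embedding is inherited directly from the corresponding clause of that lemma. The main obstacle I expect is the vertex-star step: $\Lambda\Delta^* = I$ is only four scalar equations and cannot by itself pin down the $|V|$ face-closure conditions, so the argument must import the missing information from the equilibrium hypothesis on $\omega$ supplied by the ambient theorem in which this lemma is applied.
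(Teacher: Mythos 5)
Your proposal follows the same route as the paper's proof: rewrite $\Lambda\Delta^* = I$ as $\lambda_i\Delta^* = [\lambda_i]^*$ for $i=1,2$ via the rotated-dual classes of Lemma~\ref{L:cocirc2}, and then feed the resulting identity into Lemma~\ref{L:embed-on-square} applied to the dual graph. The one place you diverge is the step you flag as an obstacle, and your concern there is legitimate: the paper's proof disposes of it with the words ``extending by linearity,'' but linearity only gives $\sum_e \Delta^*_e\theta_e = [\theta]^*$ for cocirculations $\theta$ in the two-dimensional span of $\lambda_1,\lambda_2$, whereas the hypothesis of Lemma~\ref{L:embed-on-square} for $G^*$ requires it on the whole cocirculation space of $G$, which also contains the null-cohomologous vertex-star cocirculations $\theta_p$. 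The condition $\Lambda\Delta^*=I$ is only four scalar equations and genuinely does not imply the face-closure conditions $\theta_p\Delta^* = \rowvec{0}{0}$: for instance, if some edge $e$ has homology vector $\rowvec{0}{0}$, adding an arbitrary vector to the row $\Delta^*_e$ leaves $\Lambda\Delta^*$ unchanged but makes $\Delta^*$ the displacement matrix of no drawing dual to $G$. So, read literally, the lemma needs the extra hypothesis that the columns of $\Delta^*$ describe circulations in $G$ (equivalently, the vertex-star conditions), and that is exactly how it is used: in each invocation (Lemmas~\ref{L:abg-square}, \ref{L:force}, \ref{L:abg-nonsq}, and the force-diagram lemma for $\Torus_M$), the circulation property of the columns is established separately beforehand, via Equations~\eqref{Eq:circulation2} and \eqref{Eq:circulation3} coming from the equilibrium condition on $\omega$. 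With that hypothesis added, your decomposition $\theta = \theta_0 + c_1\lambda_1 + c_2\lambda_2$ closes the argument exactly as the paper intends, so your account is correct --- and in fact more careful than the paper's own one-line justification.
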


\begin{proof}
Let $\lambda_1$ and $\lambda_2$ denote the rows of $\Lambda$.  Rewriting the identity
$\Lambda\Delta^* = \DIFF{I}$ in terms of these row vectors gives us
\(
	\sum_e \Delta^*_e\lambda_{1,e} = \DIFF{\rowvec{1}{0}} = [\lambda_1]^*
\)
\text{and} 
\(
	\sum_e \Delta^*_e\lambda_{2,e} = \DIFF{\rowvec{0}{1}} = [\lambda_2]^*.
\)
Extending by linearity, we have $\sum_e \Delta^*_e\theta_e = [\theta]^*$ for every cocirculation~$\theta$ in~$G^*$.  The result now follows from Lemma~\ref{L:embed-on-square}. 
\end{proof}

\begin{lemma}
\label{L:abg-square}
If $\Delta\Omega\Delta^T = \DIFF{I}$, then $\omega$ is a \DIFF{parallel} reciprocal stress for $G$ on $\Torus_\square$. \DIFF{If $\omega$ is a positive equilibrium stress, then the parallel reciprocal diagram is in fact \emph{embedded} on $\Torus_\square$.}
\end{lemma}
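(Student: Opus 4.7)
The plan is to construct a parallel reciprocal dual drawing of $G^*$ on $\Torus_\square$ directly from the data, mirroring the proof of Lemma~\ref{L:square-abg} in reverse, and then close with Lemma~\ref{L:dual-embed-on-square}. Specifically, I take as the candidate displacement matrix for $G^*$ the matrix $\Delta^* = (\Delta\Omega)^T = \Omega\Delta^T$, whose $e$-th row is $\omega_e(\Dx_e,\Dy_e)$. By construction each dual edge vector is a scalar multiple of its primal counterpart, so the parallelism $e\parallel e^*$ and the length relation $\abs{e^*} = \abs{\omega_e}\abs{e}$ are both immediate (with the standard convention that a negative stress merely reverses the orientation of the dual dart).

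The key algebraic step is to verify $\Lambda\Delta^* = I$, which is the hypothesis of Lemma~\ref{L:dual-embed-on-square}. First I would observe that each of the two columns of $\Delta^*$ is a circulation in $G$: at every vertex $p$, the equilibrium condition $\sum_{d:\,\Tail(d)=p}\omega_d(\Head(d)-\Tail(d)) = \rowvec{0}{0}$ splits into $\sum_{d:\,\Tail(d)=p}\omega_d\Dx_d = 0$ and $\sum_{d:\,\Tail(d)=p}\omega_d\Dy_d = 0$, which are precisely the flow-conservation laws at $p$ (antisymmetry under dart reversal is automatic, since $\Dx_{\rev(d)} = -\Dx_d$ and $\omega_{\rev(d)} = \omega_d$). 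Lemma~\ref{L:harmonic} then says $\Lambda\phi = \Delta\phi$ for every circulation $\phi$, so column by column,
\[
	\Lambda\Delta^* ~=~ \Delta\Delta^* ~=~ \Delta\Omega\Delta^T ~=~ I,
\]
using the hypothesis in the last equality. Lemma~\ref{L:dual-embed-on-square} now realizes $\Delta^*$ as the displacement matrix of a geodesic drawing of $G^*$ on $\Torus_\square$, completing the construction of the parallel reciprocal dual.

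For the embedding claim, assume $\omega$ is positive. I would then exhibit the dual stress $\omega^*_{e^*} = 1/\omega_e$, which is positive by hypothesis. Equilibrium at a dual vertex $p^*$ translates to summing $\omega^*_{d^*}\,\Delta^*_{d^*} = (1/\omega_d)\,\omega_d\,\Delta_d = \Delta_d$ over the dual darts out of $p^*$, i.e., over the primal darts bounding the contractible face $p$ of $G$; this is the closed-walk sum around the boundary of a disk, which vanishes. The ``moreover'' clause of Lemma~\ref{L:dual-embed-on-square} then upgrades the geodesic drawing to an embedding.

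I do not anticipate a significant obstacle: this is essentially the converse of Lemma~\ref{L:square-abg}, and every ingredient needed is already in place. The only mild subtlety is the sign bookkeeping when $\omega$ has negative entries, which only flips orientations of individual dual darts and leaves the parallelism, the matrix identities, and the application of Lemma~\ref{L:dual-embed-on-square} intact.
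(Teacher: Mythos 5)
Your proposal is correct and follows essentially the same route as the paper: set $\Delta^* = (\Delta\Omega)^T$, use the equilibrium condition to see its columns are circulations, invoke Lemma~\ref{L:harmonic} to get $\Lambda\Delta^* = \Delta\Omega\Delta^T = I$, apply Lemma~\ref{L:dual-embed-on-square}, and use the dual stress $\omega^*_{e^*} = 1/\omega_e$ (an equilibrium stress because the dual darts at a dual vertex correspond to the boundary cycle of the corresponding face of $G$) to upgrade to an embedding when $\omega$ is positive. The only cosmetic slip is reusing $p$ for a face of $G$ in the embedding step, where the paper writes $f$ and $f^*$; the argument itself matches.
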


\begin{proof}
Set $\Delta^* = \DIFF{(\Delta\Omega)^T}$.  Because $\omega$ is an equilibrium stress in $G$, for every vertex $p$ of $G$ we have
\begin{equation}
	\sum_{q\colon pq\in E} \Delta^*_{(\arc{p}{q})^*}
	~=~
	\sum_{q\colon pq\in E} \omega_{pq} \Delta_{\arc{p}{q}}^{\DIFF{T}}
	~=~
	\rowvec{0}{0}.
	\label{Eq:circulation2}
\end{equation}
It follows that the columns of $\Delta^*$ describe circulations in~$G$, and therefore Lemma~\ref{L:harmonic} implies $\Lambda\Delta^* = \Delta\Delta^* = \DIFF{\Delta (\Delta\Omega)^T = \Delta\Omega\Delta^T = I}$.

Lemma \ref{L:dual-embed-on-square} now implies that~$\Delta^*$ is the displacement matrix of a drawing~$G^*$ dual to $G$.  Moreover, the stress vector $\omega^*$ defined by $\omega^*_{e^*} = 1/\omega_e$ is an equilibrium stress for $G^*$: under this stress vector, the darts leaving any dual vertex~$f^*$ are dual to the clockwise boundary cycle of face $f$ in $G$.  Thus \DIFF{if $\omega$ is positive, then} $G^*$ is in fact an embedding.  By construction, each edge of $G^*$ is \DIFF{parallel} to the corresponding edge of~$G$.
\end{proof}

\subsection{Force Diagrams}
\label{SS:force}

The results of the previous section have a more physical interpretation that may be more intuitive.  Let~$G$ be any geodesic graph on the unit square flat torus $\Torus_\square$.  Recall that any \DIFF{equilibrium} stress~$\omega$ on $G$ induces an \DIFF{equilibrium} stress on its universal cover $\widetilde{G}$, which in turn induces a \DIFF{(parallel)} reciprocal diagram~$(\widetilde{G})^*$ by the classical Maxwell--Cremona correspondence.  This infinite plane graph $(\widetilde{G})^*$ is doubly-periodic, but in general with a different period lattice from the universal cover $\widetilde{G}$.

Said differently, we can always construct another geodesic torus graph $H$ that is combinatorially dual to $G$, such that for every edge $e$ of $G$, the corresponding edge $e^*$ of $H$ is \DIFF{parallel} to $e$ and has length~$\omega_e\cdot\abs{e}$; however, this torus graph $H$ does not necessarily lie on the \emph{square} flat torus.  (By construction, $H$ is the unique torus graph whose universal cover is $(\widetilde{G})^*$, the reciprocal diagram of the universal cover of $G$.)  We call $H$ the \EMPH{\DIFF{parallel} force diagram} of $G$ with respect to $\omega$.  The \DIFF{parallel} force diagram $H$ lies on the same flat torus $\Torus_\square$ as $G$ if and only if $\omega$ is a \DIFF{parallel} reciprocal stress for $G$.

\begin{lemma}
\label{L:force}
Let $G$ be a geodesic graph in $\Torus_\square$, and let $\omega$ be a \DIFF{(not necessarily positive)} equilibrium stress for $G$.  The \DIFF{parallel} force diagram of $G$ with respect to $\omega$ lies on the flat torus $\Torus_M$, where $M = \DIFF{\Delta\Omega\Delta^T}$.
\end{lemma}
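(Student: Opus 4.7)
Plan: The proof should parallel that of Lemma~\ref{L:abg-square} but without the hypothesis $\Delta\Omega\Delta^T = I$; the same algebraic identity that pinned the diagram to $\Torus_\square$ there will instead identify the actual period lattice of the parallel force diagram in the general case. The argument is also directly analogous to the orthogonal version, Lemma~\ref{L:forceM}.

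As in Lemma~\ref{L:abg-square}, set $\Delta^* = (\Delta\Omega)^T$ and interpret the $e$-th row as the planar displacement vector $\omega_e\Delta_e$ of the dual edge $e^*$, so each $e^*$ is parallel to $e$ with length $\omega_e|e|$ as required of the parallel force diagram. Exactly as in Equation~\eqref{Eq:circulation2}, the equilibrium condition for $\omega$ at each vertex $p$ of $G$ says the columns of $\Delta^*$ describe circulations in $G$; equivalently, the displacement vectors close up around every face $p^*$ of $H$. Hence the vertex positions of $H$ are consistently defined in the plane up to translation, yielding a well-defined universal cover $\widetilde{H}$.

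It remains to identify the period lattice of $\widetilde{H}$. Since $G$ lies on $\Torus_\square$ we have $\Lambda = \Delta$, so Lemma~\ref{L:harmonic} gives $\Lambda\Delta^* = \Delta\Delta^* = \Delta\Omega\Delta^T = M$. By Lemma~\ref{L:cocirc}, the rows of $\Lambda$ are cocirculations in $G$ whose cohomology classes generate $H^1(\Torus;\mathbb{Z})$; under primal-dual duality on the torus, these correspond to closed walks in $H$ realizing the two generators of $H_1(\Torus;\mathbb{Z})$. The $i$-th row of $\Lambda\Delta^* = M$ is therefore the planar displacement accumulated along the $i$-th homology generator traversed in $\widetilde{H}$, i.e., the $i$-th period vector of $\widetilde{H}$. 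The main obstacle is the sign/orientation bookkeeping: one must check that the rows of $\Lambda\Delta^*$ yield precisely the oriented period generators of $\widetilde{H}$ rather than some unimodular transformation of them, but this is mechanical and closely mirrors the orthogonal analysis underlying Lemma~\ref{L:forceM}. Once the conventions are matched, the period lattice of $\widetilde{H}$ is spanned by the columns of $M$, so $H$ lies on $\Torus_M$.
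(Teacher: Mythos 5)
Your core computation coincides with the paper's: set $\Delta^* = (\Delta\Omega)^T$, use the equilibrium condition (Equation~\eqref{Eq:circulation2}) to see that the columns of $\Delta^*$ are circulations in $G$, and apply Lemma~\ref{L:harmonic} to get $\Lambda\Delta^* = \Delta\Delta^* = \Delta\Omega\Delta^T$. Two caveats, one cosmetic and one substantive. The cosmetic one: your justification ``since $G$ lies on $\Torus_\square$ we have $\Lambda = \Delta$'' is false --- $\Lambda$ has integer homology vectors as columns while $\Delta$ has real displacement vectors, and they differ entry-wise. What is true, and all you need, is that $\Lambda\phi = \Delta\phi$ for every \emph{circulation} $\phi$ (Lemma~\ref{L:harmonic}), which applies here because the columns of $\Delta^*$ are circulations; just delete the false aside.

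The substantive issue is that the step you defer as ``mechanical sign/orientation bookkeeping'' is exactly where the content of this lemma lives, and it is the one place where the parallel and orthogonal settings genuinely differ. You cite Lemma~\ref{L:cocirc}, under which the rows of $\Lambda$ dualize (in the \emph{standard} dual) to closed walks with cohomology classes $\rowvec{0}{1}$ and $\rowvec{-1}{0}$; running your period-lattice identification with those classes produces a quarter-turned lattice, which is precisely why the \emph{orthogonal} force diagram lands on $\Torus_N$ with $N = JM\Delta\Omega\Delta^T J^T$ rather than on $\Torus_{\Delta\Omega\Delta^T}$. The parallel force diagram is the \emph{rotated} dual, so the correct classes are those of Lemma~\ref{L:cocirc2}, namely $\rowvec{1}{0}$ and $\rowvec{0}{1}$; without invoking that lemma (or redoing its argument), your conclusion that the rows of $\Lambda\Delta^*$ are the oriented period generators of $\widetilde{H}$, rather than a unimodular (e.g.\ quarter-turn) transform of them, is unsupported. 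The paper avoids explicit bookkeeping altogether: setting $M = \Delta\Omega\Delta^T$, symmetry gives $\Lambda\Delta^*(M^T)^{-1} = I$, so Lemma~\ref{L:dual-embed-on-square} shows $\Delta^*(M^T)^{-1}$ is the reference displacement matrix of a drawing dual to $G$ on $\Torus_\square$, hence $\Delta^*$ is the native displacement matrix of the image of $G^*$ on $\Torus_M$, and $H$ is a translate of that image. To close your argument, either cite Lemma~\ref{L:cocirc2} and carry the orientation check through, or adopt the paper's normalization via Lemma~\ref{L:dual-embed-on-square}, which packages that check once and for all.
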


\begin{proof}
As usual, let $\Delta$ be the displacement matrix of $G$.  Let $\Delta^*$ denote the displacement matrix of the force diagram $H$; by definition, we have $\Delta^* = \DIFF{(\Delta\Omega)^T = \Omega\Delta^T}$.   Equation~\eqref{Eq:circulation2} implies that the columns of $\Delta^*$ are circulations in $G$.  Thus, Lemma~\ref{L:harmonic} implies that $\Lambda\Delta^* = \Delta\Delta^* = \DIFF{\Delta\Omega\Delta^T}$.

Set $M = \DIFF{\Delta\Delta^* = \Delta\Omega\Delta^T}$.  We immediately have $\Lambda\Delta^* = \DIFF{M = M^T}$ and therefore $\Lambda\Delta^*(M^T)^{-1} = \DIFF{I}$.  Lemma~\ref{L:dual-embed-on-square} implies that $\Delta^*(M^T)^{-1}$ is the displacement matrix of a homotopic \DIFF{drawing} of $G^*$ on $\Torus_\square$.  It follows that $\Delta^*$ is the displacement matrix of the image of $G^*$ on~$\Torus_M$.  We conclude that $H$ is a translation of the image of $G^*$ on $\Torus_M$.
\end{proof}

\subsection{Arbitrary Flat Tori}

\DIFF{In the case of \emph{orthogonal} reciprocal diagrams, Erickson and Lin~\cite{el-tmcdc-20} established necessary and sufficient conditions for an equilibrium graph to be reciprocal on \emph{some} flat torus.}

\DIFF{In contrast, we find that in the case of \emph{parallel} reciprocal diagrams, an equilibrium graph is reciprocal on $\Torus_\square$ if and only if it is reciprocal on \emph{every} flat torus. This is perhaps unsurprising: orthogonality relies on the \emph{conformal} structure of the flat torus $\Torus$; parallelism is an \emph{affine} property.}

\begin{lemma}
\label{L:nonsq-abg}
If $\omega$ is a \DIFF{parallel} reciprocal stress for a geodesic graph $G$ on $\Torus_M$ for some non-singular matrix $M$, then \DIFF{$\Delta\Omega\Delta^T=I$}.
\end{lemma}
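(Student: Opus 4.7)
The plan is to reduce to the already-established square-torus case in Lemma~\ref{L:square-abg}, by pulling everything back through the affine map $M^{-1} \colon \Torus_M \to \Torus_\square$. As the remark preceding the statement emphasizes, parallelism in $\Real^2$ is preserved by any invertible linear transformation; moreover, the length ratio of two parallel vectors is likewise preserved, since $u = cv$ implies $M^{-1}u = cM^{-1}v$. So a parallel reciprocal structure on $\Torus_M$ with stress $\omega$ should descend unchanged to a parallel reciprocal structure on $\Torus_\square$ with the \emph{same} stress $\omega$, at which point Lemma~\ref{L:square-abg} immediately finishes the proof.

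Concretely, I would first unfold the hypothesis to obtain a geodesic drawing of $G^*$ on $\Torus_M$ in which every edge $e^*$ is parallel to the corresponding edge $e$ of $G$ on $\Torus_M$ and $\abs{e^*} = \omega_e \abs{e}$. Applying $M^{-1}$ sends $G$ on $\Torus_M$ back to the fixed drawing of $G$ on $\Torus_\square$ (with displacement matrix $\Delta$) and sends $G^*$ on $\Torus_M$ to a geodesic drawing of $G^*$ on $\Torus_\square$. By the two invariance observations above, every edge of this new drawing of $G^*$ is parallel to its primal counterpart on $\Torus_\square$ with the same length ratio $\omega_e$, so $\omega$ is a parallel reciprocal stress for $G$ on $\Torus_\square$. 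Lemma~\ref{L:square-abg} then yields $\Delta\Omega\Delta^T = I$.

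The only point needing any real care is verifying that the pullback produces a bona fide rotated dual drawing of $G^*$ on $\Torus_\square$ in the sense of Section~2. Fortunately, Section~2 already defines ``rotated dual to $G$ on $\Torus_M$'' by passing to the image on $\Torus_\square$, so the pulled-back drawing is rotated dual to $G$ essentially by definition. I do not anticipate a serious obstacle; once the affine transfer is justified, the argument collapses to a one-line invocation of Lemma~\ref{L:square-abg}.
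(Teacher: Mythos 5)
Your proof is correct and is essentially the paper's argument: the paper performs the same pullback to the reference square torus, expressed as the matrix identity $\Delta^* = (M\Delta\Omega)^{T}(M^T)^{-1} = \Omega\Delta^T$, which is exactly your affine-invariance-of-parallelism step. The only cosmetic difference is that the paper then re-runs the circulation/cohomology computation of Lemma~\ref{L:square-abg} inline, whereas you invoke that lemma directly.
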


\begin{proof}
Suppose $\omega$ is a \DIFF{parallel} reciprocal stress for $G$ on $\Torus_M$.  Then there is a geodesic \DIFF{drawing} of the dual graph $G^*$ on $\Torus_M$ where $e~\DIFF{\parallel}~e^*$ and $\abs{e^*} = \omega_e\abs{e}$ for every edge $e$ of $G$.

We will consider the geometry of $G$ and $G^*$ on the reference torus~$\Torus_\square$.  (The \DIFF{drawinsg} of $G$ and $G^*$  on the reference torus $\Torus_\square$ are still dual, but not necessarily reciprocal.)  Let~$\Delta$ denote the $2\times E$ \emph{reference} displacement matrix for $G$, whose columns are the displacement vectors for~$G$ on the square torus~$\Torus_\square$.  Then the columns of $M\Delta$ are the \emph{native} displacement vectors for~$G$ on the torus~$\Torus_M$.  Thus, the \emph{native} displacement row vectors of~$G^*$ are given by the rows of the $E \times 2$ matrix $(M\Delta\Omega)^{\DIFF{T}}$.  Finally, let $\Delta^* = (M\Delta\Omega)^{\DIFF{T}} (M^T)^{-1}$ denote the \emph{reference} displacement row vectors for $G^*$ on the square torus $\Torus_\square$.  
We can rewrite this definition as
\begin{equation}
\begin{aligned}
\Delta^*
	&= (M\Delta\Omega)^{\DIFF{T}}(M^T)^{-1}	
			\\
	&= \DIFF{\Omega \Delta^T\, M^T} (M^{T})^{-1} \\
	&~\DIFF{= \Omega\Delta^T.}
\end{aligned}
\label{Eq:Delta-star}
\end{equation}
%

Because the rows of $\Delta^*$ are the displacement vectors for $G^*$, equation \eqref{Eq:circulation} implies that the \emph{columns} of $\Delta^*$ describe circulations in $G$, and therefore $\Delta\Delta^* = \Lambda\Delta^* = \DIFF{\begin{psmallmatrix} 1 & 0 \\ 0 & 1 \end{psmallmatrix} = I}$ by Lemmas~\ref{L:harmonic} and~\ref{L:cocirc}.  We conclude that
\(\Delta\Omega\Delta^T
	= \DIFF{\Delta\Delta^* = I}\).
\end{proof}

\begin{lemma}
\label{L:abg-nonsq}
If \DIFF{$\Delta\Omega\Delta^T = I$}, then $\omega$ is a \DIFF{parallel} reciprocal stress for $G$ on $\Torus_M$ where \DIFF{$M$ is any non-singular $2\times 2$ matrix}. \DIFF{Moreover, if $\omega$ is a \emph{positive} equilibrium stress, then the reciprocal diagram is embedded on $\Torus_M$.}
\end{lemma}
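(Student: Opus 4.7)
The plan is to reduce to the square-torus case (Lemma~\ref{L:abg-square}) and then transport the resulting parallel reciprocal drawing to $\Torus_M$ via the affine map $x \mapsto Mx$. Because parallelism and edge-wise length ratios are preserved under any global linear transformation, the construction survives the transport without modification --- this is exactly the ``parallelism is affine'' observation highlighted just before the statement.

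First I set $\Delta^* = \Omega\Delta^T$, viewed as the reference row-displacement matrix for $G^*$ on $\Torus_\square$. Exactly as in equation~\eqref{Eq:circulation2}, the equilibrium condition on $\omega$ at each vertex of $G$ implies the columns of $\Delta^*$ describe circulations in $G$; Lemma~\ref{L:harmonic} then gives $\Delta\Delta^* = \Lambda\Delta^* = \Delta\Omega\Delta^T = I$, and Lemma~\ref{L:dual-embed-on-square} yields a geodesic drawing of the rotated dual $G^*$ on $\Torus_\square$ with displacement matrix $\Delta^*$. Next I transport both drawings to $\Torus_M$. The native displacement column of edge $e$ in $G$ is then $M\Delta_e$, and the native displacement column of $e^*$ is $\omega_e M\Delta_e$; these are proportional with common factor $\omega_e$, so $e \parallel e^*$ and $\abs{e^*} = \abs{\omega_e}\cdot\abs{e}$, which establishes parallel reciprocity on $\Torus_M$. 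For the moreover clause, if $\omega$ is positive then Lemma~\ref{L:abg-square} already guarantees the drawing of $G^*$ is embedded on $\Torus_\square$, and the homeomorphism induced by the non-singular matrix $M$ preserves embeddedness.

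I expect no serious obstacle. Unlike the orthogonal case in Section~\ref{S:oreciprocal}, where a covariance-matrix scaling condition $\alpha\beta - \gamma^2 = 1$ must be arranged (essentially requiring that $M$ be conformal up to scale), here an arbitrary non-singular $M$ suffices because parallelism is affine-invariant. The one point worth double-checking is that the transported drawing really qualifies as a \emph{rotated} dual on $\Torus_M$ in the sense of Section~2; but since ``rotated dual'' is defined in terms of the pullback to $\Torus_\square$, and that pullback under $M^{-1}$ simply returns the original drawings we constructed on $\Torus_\square$, this is automatic.
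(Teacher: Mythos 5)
Your proposal is correct and follows essentially the same route as the paper: set the reference dual displacements to $\Omega\Delta^T$, use the equilibrium condition and Lemmas~\ref{L:harmonic} and~\ref{L:dual-embed-on-square} to get a drawing of $G^*$ on $\Torus_\square$ (embedded when $\omega$ is positive), and push everything forward by $M$, where parallelism and the length ratios are preserved. The only cosmetic difference is direction: the paper defines the native dual displacements $(M\Delta\Omega)^T$ on $\Torus_M$ and pulls back by $(M^T)^{-1}$ via derivation~\eqref{Eq:Delta-star}, whereas you construct on $\Torus_\square$ first and transport, which is the same computation.
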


\begin{proof}
Suppose \DIFF{$\Delta\Omega\Delta^T = I$}.  Fix an arbitrary $2\times 2$ \DIFF{non-singular matrix $M$}.  Let~$\Delta$ denote the $2 \times E$ \emph{reference} displacement matrix for $G$ on the square flat torus~$\Torus_\square$, and define the $E \times 2$ matrix $\Delta^* = (M\Delta\Omega)^{\DIFF{T}} (M^T)^{-1}$.

Derivation \eqref{Eq:Delta-star} in the proof of Lemma \ref{L:nonsq-abg} implies $\Delta^* = \DIFF{\Omega \Delta^T}$.
It follows that
\[
	\Delta\Delta^*
	~=~ \DIFF{\Delta \Omega \Delta^T~=~I}.
\]
Because $\omega$ is an equilibrium stress in $G$, for every vertex $p$ of $G$ we have
\begin{equation}
	\sum_{q\colon pq\in E} \Delta^*_{(\arc{p}{q})^*}
	~=~
	\sum_{q\colon pq\in E} \omega_{pq} \Delta_{\arc{p}{q}}^{\DIFF{T}}
	~=~
	\rowvec{0}{0}.
	\label{Eq:circulation3}
\end{equation}
Once again, the columns of $\Delta^*$ describe circulations in $G$, so Lemma~\ref{L:harmonic} implies $\Lambda\Delta^* = \Delta\Delta^* = \DIFF{I}$.  Lemma~\ref{L:dual-embed-on-square} now implies that $\Delta^*$ is the displacement matrix of a homotopic \DIFF{drawing} of $G^*$ on~$\Torus_\square$, \DIFF{and if $\omega$ is positive, said drawing is in fact an embedding}.  It follows that 
$(M\Delta\Omega)^{\DIFF{T}} = \Delta^*M^T$ is the displacement matrix of the image
of $G^*$ on $\Torus_M$.  By construction, each edge of~$G^*$ is \DIFF{parallel} to its corresponding edge of~$G$.  We conclude that $\omega$ is a \DIFF{parallel} reciprocal stress for~$G$.
\end{proof}

Our main theorem now follows immediately. 

\begin{theorem}
\label{T:skew-rec-nonlinear}
Let $G$ be a geodesic graph on $\Torus_\square$ with an equilibrium stress $\omega$. If \DIFF{$\Delta\Omega\Delta^T = I$}, then $\omega$ is a \DIFF{parallel} reciprocal stress for the image of $G$ on~$\Torus_M$ \DIFF{for any non-singular matrix $M$}; \DIFF{furthermore, if $\omega$ is a \emph{positive} equilibrium stress, then the parallel reciprocal diagram is embedded on $\Torus_M$}.  On the other hand, if~\DIFF{$\Delta\Omega\Delta^T \ne I$}, then $\omega$ is not a \DIFF{parallel} reciprocal stress for the image of $G$ on any flat torus.
\end{theorem}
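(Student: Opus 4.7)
The plan is straightforward: this theorem is precisely the combined restatement of Lemmas~\ref{L:nonsq-abg} and~\ref{L:abg-nonsq}, and the proof should just invoke each in the appropriate direction. I would split the argument into two short paragraphs matching the two halves of the statement.

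For the forward (``if'') direction, I would apply Lemma~\ref{L:abg-nonsq} directly. Assuming $\Delta\Omega\Delta^T = I$, that lemma already exhibits, for any non-singular $2 \times 2$ matrix $M$, a geodesic drawing of $G^*$ on $\Torus_M$ whose displacement matrix is $(M\Delta\Omega)^T$, with every dual edge parallel to its primal edge and of length $\omega_e\abs{e}$. The positivity addendum of the theorem is also immediate: Lemma~\ref{L:abg-nonsq} already upgrades ``drawing'' to ``embedding'' precisely when $\omega$ is a positive equilibrium stress, so no additional work is needed.

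For the converse, I would argue by contrapositive. Suppose $\omega$ is a parallel reciprocal stress for the image of $G$ on some flat torus $\Torus_M$, with $M$ non-singular. Then Lemma~\ref{L:nonsq-abg} forces $\Delta\Omega\Delta^T = I$. Taking the contrapositive yields the ``on the other hand'' clause: if $\Delta\Omega\Delta^T \ne I$, then $\omega$ cannot be a parallel reciprocal stress on \emph{any} flat torus.

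There is essentially no obstacle, since all of the substantive content has been absorbed into the two preceding lemmas; the only small point to be careful about is the quantifier on $M$, which is handled automatically by the uniform ``for any non-singular $M$'' phrasing of Lemma~\ref{L:abg-nonsq} together with the ``for some non-singular $M$'' hypothesis of Lemma~\ref{L:nonsq-abg}. In contrast to the orthogonal case of Theorem~\ref{T:skew-rec-nonlinear} in Section~\ref{S:oreciprocal}, there is no scaling ambiguity to resolve, which reflects the paper's earlier observation that parallelism is an affine rather than conformal property.
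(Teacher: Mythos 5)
Your proposal is correct and matches the paper exactly: the paper states that the theorem ``follows immediately'' from Lemmas~\ref{L:nonsq-abg} and~\ref{L:abg-nonsq}, which is precisely the two-direction assembly (with the positivity addendum and contrapositive) that you spell out. No gaps; the quantifier handling on $M$ is right.
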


In terms of force diagrams:

\begin{lemma}
\label{L:forceM}
Let $G$ be a geodesic graph on $\Torus_M$, and let $\omega$ be a positive equilibrium stress for $G$.  The \DIFF{parallel} force diagram of $G$ with respect to $\omega$ lies on the flat torus $\Torus_N$, where $N = \DIFF{M \Delta\Omega\Delta^T}$.
\end{lemma}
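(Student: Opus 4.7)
The plan is to adapt the proof of Lemma~\ref{L:force} to the setting where $G$ lives on $\Torus_M$ rather than $\Torus_\square$, carefully distinguishing reference from native displacement matrices. Let $\Delta$ denote the reference displacement matrix of $G$ on $\Torus_\square$, so that its native displacement matrix on $\Torus_M$ is $M\Delta$. By definition of the parallel force diagram, each dual edge $e^*$ of $H$ is parallel to $e$ as drawn on $\Torus_M$ and has length $\omega_e\cdot\abs{e}$; taking rows to be the dual displacement row vectors, the native displacement matrix of $H$ is
\[
    \Delta^* ~=~ (M\Delta\Omega)^T ~=~ \Omega\Delta^T M^T.
\]

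Next I would show that the columns of $\Delta^*$ describe circulations in $G$. The equilibrium condition on $\Torus_M$ states that at every vertex $p$ the weighted sum $\sum_q \omega_{pq}(M\Delta)_{\arc{p}{q}}$ vanishes; since $M$ is non-singular, the same holds for $\sum_q \omega_{pq}\Delta_{\arc{p}{q}}$, exactly as in equation~\eqref{Eq:circulation2}. Thus the two columns of $\Omega\Delta^T$ are circulations in $G$, and the columns of $\Omega\Delta^T M^T$ are linear combinations thereof, hence also circulations. Lemma~\ref{L:harmonic}, applied on the reference torus $\Torus_\square$, then yields
\[
    \Lambda\Delta^* ~=~ \Delta\Delta^* ~=~ \Delta\Omega\Delta^T M^T.
\]

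Set $N := M\Delta\Omega\Delta^T$; by the symmetry of the covariance matrix $\Delta\Omega\Delta^T$, we have $N^T = \Delta\Omega\Delta^T M^T = \Lambda\Delta^*$, and hence $\Lambda\bigl(\Delta^*(N^T)^{-1}\bigr) = I$. Lemma~\ref{L:dual-embed-on-square} implies that $\Delta^*(N^T)^{-1}$ is the reference displacement matrix on $\Torus_\square$ of a drawing homotopic to $G^*$; equivalently, $\Delta^*$ is the native displacement matrix of that drawing on $\Torus_N$. Since the rows of $\Delta^*$ are, by construction, the native edge vectors of $H$, we conclude that $H$ is a translation of the image of $G^*$ on $\Torus_N$, as claimed.

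The only real subtlety here is the bookkeeping between reference and native displacement matrices when both primal and dual graphs may live on non-square flat tori; once this distinction is set up cleanly, the argument proceeds almost verbatim as in Lemma~\ref{L:force}, with the substitution $\Delta \mapsto M\Delta$ producing the extra left factor of~$M$ in $N = M\Delta\Omega\Delta^T$.
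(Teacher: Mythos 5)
Your proof is correct and follows essentially the same route as the paper: set $\Delta^* = (M\Delta\Omega)^T = \Omega\Delta^T M^T$, use the circulation property and Lemma~\ref{L:harmonic} to get $\Lambda\Delta^* = \Delta\Omega\Delta^T M^T = N^T$, and then apply Lemma~\ref{L:dual-embed-on-square} to $\Delta^*(N^T)^{-1}$ to conclude that $H$ lies on $\Torus_N$. The only (welcome) extra detail is your explicit observation that equilibrium with respect to the native coordinates on $\Torus_M$ transfers to the reference coordinates because $M$ is non-singular, a point the paper leaves implicit by citing Equation~\eqref{Eq:circulation3} directly.
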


\begin{proof}
We argue exactly as in the proof of Lemma \ref{L:force}.
Let $\Delta$ be the \emph{reference} displacement matrix of (the image of) $G$ on $\Torus_\square$.  Then the \emph{native} displacement matrix of the force diagram is $\Delta^* = (M\Delta\Omega)^{\DIFF{T}} \DIFF{= \Omega\Delta^TM^T}$.  Equation~\eqref{Eq:circulation3} and Lemma~\ref{L:harmonic} imply that $\Lambda\Delta^* = \DIFF{\Delta\Omega\Delta^TM^T}$.

Now let $N = \DIFF{M \Delta\Omega\Delta^T}$.  We immediately have $\DIFF{N^T} = \Lambda\Delta^*$ and thus $\Lambda\Delta^*(N^T)^{-1} = \DIFF{I}$.
Lemma~\ref{L:dual-embed-on-square} implies that $\Delta^*(N^T)^{-1}$ is the displacement matrix of a homotopic \DIFF{drawing} of $G^*$ on $\Torus_\square$.  It follows that $\Delta^*$ is the displacement matrix of the image of $G^*$ on~$\Torus_N$.
\end{proof}

\subsection{Example}

Consider the symmetric embedding of $K_7$ on the square flat torus $\Torus_\square$ shown in Figure~\ref{F:uniform}.  Symmetry implies that $G$ is in equilibrium with respect to the uniform stress $\omega \equiv 1$.  Straightforward calculation gives us $\Delta\Omega\Delta^T = \begin{psmallmatrix} 2 & 1 \\ 1 & 2 \end{psmallmatrix}$ for this stress vector.  Thus, Lemma~\ref{L:square-abg} immediately implies that $\omega$ is not a \DIFF{parallel} reciprocal stress for $G$; rather, by Lemma~\ref{L:force}, the \DIFF{parallel} force diagram of $G$ with respect to $\omega$  lies on the torus $\Torus_M$, where $M = \DIFF{\Delta\Omega\Delta^T = \begin{psmallmatrix} 2 & 1 \\ 1 & 2 \end{psmallmatrix}}$.

\begin{figure}[ht]
\centering
\includegraphics[scale=0.5,page=1]{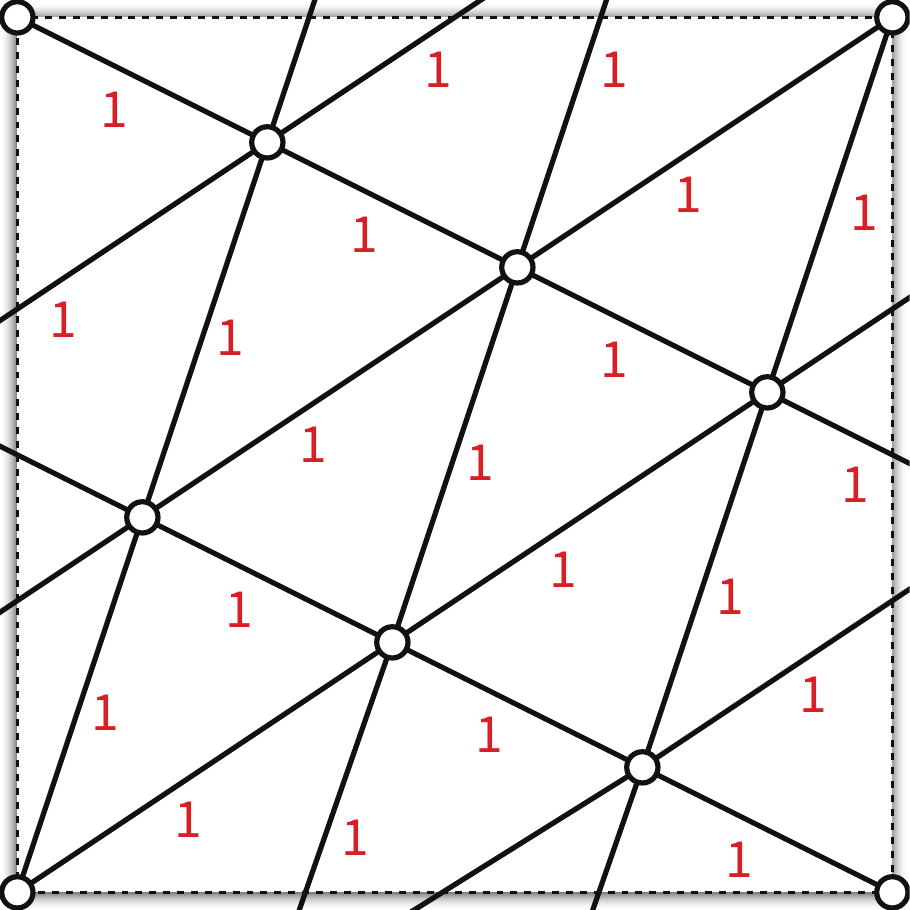}
\caption{The symmetric embedding of $K_7$ with the uniform equilibrium stress $\omega\equiv 1$.}
\label{F:uniform}
\end{figure}

\DIFF{Erickson and Lin~\cite{el-tmcdc-20} show that the scaled uniform stress $\omega' \equiv 1/\sqrt3$ is an orthogonal reciprocal stress for $G$ on the flat torus $\Torus_M$ where $M = \frac{1}{\sqrt{3}} \begin{psmallmatrix} 2 & -1 \\ 0 & \sqrt{3} \end{psmallmatrix}$. On the other hand, Theorem~\ref{T:skew-rec-nonlinear} implies that~$\omega \equiv 1$ and its scalings are \emph{never} parallel reciprocal stresses.}

\section{Conclusions and Open Questions}

By working with orthogonal reciprocality, Erickson and Lin~\cite{el-tmcdc-20} were able to extend the correspondence to coherent subdivisions: a geodesic torus graph $G$ admits an orthogonal reciprocal diagram if and only if it is coherent, i.e., the weighted Delaunay graph of its vertices with respect to some vector of weights. 

Comparing our results with the results of Erickson and Lin, we find that parallel reciprocality and orthogonal reciprocality coincide exactly when the torus is square, and so parallel reciprocality corresponds to coherence on square flat tori. A partial explanation of this may lie in Cremona's skew polarity: a quarter-turn rotation is an automorphism only of \emph{square} flat tori.

We pose as a line of inquiry whether there exists a more general correspondence between parallel reciprocality and some variant of coherence, though as previously noted, parallel reciprocality is an affine property (whereas orthogonal reciprocality is a conformal property), and so any form of coherence corresponding to parallel reciprocality will also need to be an affine property.

\paragraph*{Acknowledgements.}  We thank Jeff Erickson for helpful comments, especially about skew polarity, and for help with generating figures.

\bibliographystyle{bib/newuser-doi}
\bibliography{mctorus-ext}

\def\burl#1{$\langle$\url{#1}$\rangle$}
\begin{thebibliography}{10}

\bibitem{c-sr-79}
Henry Crapo.
\newblock Structural rigidity.
\newblock \emph{Topologie structurale / Structural Topology} 1:26--45, 1979.
\newblock \burl{http://hdl.handle.net/2099/521}.

\bibitem{c-frnsg-72}
Luigi Cremona.
\newblock \emph{Le figure reciproche nella statica grafica}.
\newblock Tipografia di Giuseppe Bernardoni, 1872.
\newblock \burl{http://www.luigi-cremona.it/download/Scritti_matematici/
  1872_statica_grafica.pdf}.
\newblock English translation in \cite{c-gs-72}.

\bibitem{c-gs-72}
Luigi Cremona.
\newblock \emph{Graphical Statics}.
\newblock Oxford Univ. Press, 1890.
\newblock \burl{https://archive.org/details/graphicalstatic02cremgoog}.
\newblock English translation of \cite{c-frnsg-72} by Thomas Hudson Beare.

\bibitem{el-tmcdc-20}
Jeff Erickson and Patrick Lin.
\newblock \href{https://doi.org/10.4230/LIPIcs.SoCG.2020.40}{A toroidal
  {Maxwell}-{Cremona}-{Delaunay} correspondence}.
\newblock \emph{Proc. 36rd Int. Symp. Comput. Geom.}, 40:1--40:17, 2020.
  Leibniz Int. Proc. Informatics, Schloss Dagstuhl--Leibniz-Zentrum für
  Informatik.

\bibitem{m-rfdf-64}
James~Clerk Maxwell.
\newblock \href{https://doi.org/10.1080/14786446408643663}{On reciprocal
  figures and diagrams of forces}.
\newblock \emph{Phil. Mag. (Ser. 4)} 27(182):250--261, 1864.

\bibitem{m-atrpf-67}
James~Clerk Maxwell.
\newblock On the application of the theory of reciprocal polar figures to the
  construction of diagrams of forces.
\newblock \emph{Engineer} 24:402, 1867.
\newblock Reprinted in \cite[pp. 313--316]{m-slpjcm2-09}.

\bibitem{m-rffdf-70}
James~Clerk Maxwell.
\newblock \href{https://doi.org/10.1017/S0080456800026351}{{On} reciprocal
  figures, frames, and diagrams of forces}.
\newblock \emph{Trans. Royal Soc. Edinburgh} 26(1):1--40, 1870.

\bibitem{m-slpjcm2-09}
James~Clerk Maxwell.
\newblock \emph{The Scientific Letters and Papers of James Clerk Maxwell.
  Volume 2: 1862--1873}.
\newblock Cambridge Univ. Press, 2009.

\bibitem{r-pepf-64}
W.~J.~Macquorn Rankine.
\newblock \href{https://doi.org/10.1080/14786446408643629}{Principle of the
  equilibrium of polyhedral frams}.
\newblock \emph{London, Edinburgh, and Dublin Phil. Mag J. Sci.} 27(180):92,
  1864.

\bibitem{r-mam-58}
William John~Macquorn Rankine.
\newblock \emph{A Manual of Applied Mechanics}.
\newblock Richard Griffin and Co., 1858.
\newblock \burl{https://archive.org/details/manualappmecha00rankrich}.

\end{thebibliography}

\end{document}